\newcommand{\R}{\mathbb{R}}
\newcommand{\N}{\mathbb{N}}
\newcommand{\Z}{\mathbb{Z}}
\renewcommand{\P}{\mathbb{P}}
\newcommand{\E}{\mathbb{E}}
\newcommand{\PP}[1]{\mathbb{P}\left\{#1\right\}}
\newcommand{\1}{\mathds{1}}
\newcommand{\A}{\mathcal{A} }
\renewcommand{\AA}{\mathscr{A} }
\newcommand{\Viktor}[1]{\textcolor{black!88!black}{#1}}
\definecolor{indigo}{rgb}{0.29, 0.0, 0.51}
\newcommand{\Peter}[1]{\textcolor{black!92!black}{#1}}
\newcommand{\LN}{\mathbb N}
\newcommand{\fP}{\mathbb P}
\newcommand{\LZ}{\mathbb Z}
\newcommand{\set}[1]{{\left\{ #1 \right\}}}
\renewcommand{\P}{\mathbb{P}}
\theoremstyle{plain}
\newtheorem{thm}{Theorem}[section]
\newtheorem{prop}[thm]{Proposition}
\newtheorem{lem}[thm]{Lemma}
\newtheorem{cor}[thm]{Corollary}
\newtheorem{rmk}[thm]{Remark}
\newtheorem{Example}[thm]{Example}
\theoremstyle{definition}
\newtheorem{con}[thm]{Condition}
\newtheorem{defi}[thm]{Definition}
\author[1,2]{Viktor Bezborodov \thanks{Email: \texttt{viktor.bezborodov@uni-goettingen.de}}} 
\author[3]{
	Luca Di Persio \thanks{Email: \texttt{luca.dipersio@univr.it}}}
\author[4]{Peter Kuchling \thanks{Email: \texttt{peter.kuchling@hsbi.de}} }
\affil[1]{\emph{The University of Goettingen, Institute for Mathematical Stochastics, Germany} 
}
\affil[2]{
	{\emph{Wroc\l{}aw University of Science and Technology, Faculty of Electronics, Poland }
}}
\affil[3]{
	{\emph{The University of Verona, Department of Computer Science, Italy}}}
\affil[4]{\emph{Hochschule Bielefeld - University of Applied Sciences and Arts, Faculty of Engineering and Mathematics, Germany} 
}
\title{Explosion and non-explosion 
	for the continuous-time frog model}
\begin{document}
	
	\maketitle
	
	\begin{abstract}
		We consider the continuous-time frog model on $\Z$. At time $t = 0$, there are  $\eta (x)$  particles at $x\in \Z$, each of which is represented by a random variable. In particular,
		\Viktor{ \Peter{$(\eta(x))_{x \in \Z }$} is a collection  of independent 
			random variables
			with a common  distribution $\mu$, $\mu(\Z_+) = 1$, $\Z_+ := \N \cup \{0\}$,
			$\N = \{1,2,3,...\}$.}
		The particles at the origin are {\it active}, all other ones being assumed as {\it dormant}, or {\it sleeping}.
		Active particles perform a simple
		symmetric continuous-time random walk
		in $\Z $
		(that is, a random walk with $\exp(1)$-distributed jump times and jumps $-1$ and $1$, each with probability $1/2$),
		independently of all other particles.
		Sleeping particles stay still until the first
		arrival of an active particle to their location;
		upon arrival they become active and start their own simple random walks.
		Different sets of
		conditions are given ensuring explosion, respectively non-explosion, of the continuous-time frog model.
		\Viktor{Our results show \Peter{in particular} that if $\mu$ is the distribution of $e^{Y \ln Y}$ with a non-negative  random variable $Y$ satisfying $\E Y < \infty$, then a.s.\ no explosion occurs. On the other hand, if $a \in (0,1)$ and $\mu$ is the distribution 
			of $e^X$, where $\P \{X \geq t \} = t^{-a}$, $t \geq 1$,
			then explosion occurs a.s.}
		The proof relies on a certain type 
		of comparison to a percolation model
		which we call totally asymmetric discrete inhomogeneous
		Boolean percolation.

	\end{abstract}

	\textit{Mathematics subject classification}: 60K35

	\section{Introduction}

	
	Denote by $\A_t$  the set of sites visited by 
	\Viktor{active particles} by the time $t$.
	In this paper we 
	investigate the various conditions on $\mu$
	ensuring that the system explodes,  respectively does not explode,
	in  finite time. 
	We exclude a trivial case
	and  assume throughout that $\mu(0) < 1$.
	If $\eta(0) = 0$,
	then we add one active particle 
	at the origin at time $t = 0$.
	
	\begin{defi}\label{def linear spread}
		We say that the system explodes (in finite time) if there exists $t \in (0,\infty)$ such that $\A_t $ is infinite.
	\end{defi}

	Our aim is to analyse and give conditions for explosion and non-explosion of the continuous-time frog model.
	An equivalent definition 
	of explosion \Viktor{on $\Z$} is 
	the following:
	\label{die Huelle = shell}
	there are no sleeping particles left in a finite time.
	This equivalence is not entirely trivial \Viktor{and may not be true for the frog model on a general graph;} 
	\Viktor{in the one dimensional case} it follows 
	from the arguments on Page \pageref{page: explosion occurs in both directions} at the beginning of Section \ref{sec non expl}.
	
	The behaviour of the frog dynamics can be distinguished as follows:
	\begin{center}
		linear spread - superlinear spread but no explosion - explosion in  finite time
	\end{center}
	We will review these concepts in more detail in Section \ref{sec proof outline}.

	The question of explosion
	is a classical question 
	in the theory of branching processes
	\cite[Chapter 5, Section 9]{HarrisBook}
	and  is an important consideration
	in a general construction 
	of an interacting particle system 
	\cite{EW03}.
	An explosion
	is a phenomenon known
	to take place
	in  first-passage percolation models
	if a node can have sufficiently many neighbors
	\cite{ExplFpp}.
	A different type of explosion is considered
	in \cite{CD16}, where conditions
	for accumulation of an unbounded number
	of particles within a compact set
	\Peter{are} given for a branching random
	walk with non-negative displacements.
	In \cite{PP94} necessary and sufficient 
	conditions for explosion 
	are given for first passage percolation 
	with unit exponential weights 
	on a spherically symmetric rooted tree. 
	For a tree in which 
	every vertex in generation $n$
	has $f(n+1)$ children
	the probability of an explosion 
	is shown to be $1$ if and only if $\sum_{n=1} ^\infty
	\frac{1}{f(n)} < \infty$; the probability is $0$ otherwise.
	More general (non-exponential) weights are also briefly discussed
	in \cite{PP94}. 
	Under broad assumptions,
	conditions for 
	explosion of first passage percolation on 
	spherically symmetric trees with arbitrary weight distribution are obtained in 
	\cite{FFP_trees_expl}. As one might expect,
	a lot depends on the interplay between 
	$f$
	and the behavior of the weight distribution function 
	near zero.
	
	An explosion can occur 
	for certain classes of stochastic differential equations.
	It is sometimes referred to as a blow-up.
	Conditions for explosion and non-explosion constitute 
	a part of the classical theory
	\cite[Chapter VI, Section 4]{IkedaWat}.
	A drift condition 
	ensuring explosion 
	for a multidimensional 
	equation is given in \cite{Chow13Explosion}.
	Various terms may cause
	explosion 
	in a stochastic differential equation 
	with jumps \cite{BY16}.

	The frog model was introduced 
	in \cite{shapeFrog}
	where a shape theorem for the model was proven
	for the frog model in discrete time with $\mu = \delta _1$, i.e., at $t=0$ there is one frog at each site.
	The asymptotic properties
	of the spread 
	have been studied 
	for the frog model on various graphs:
	on the integer lattice
	\cite{shapeFrogRandom}, trees \cite{FrogsOnTrees},
	Cayley graphs \cite{FrogsOnCayleyGraphs},
	as well as multitype model on 
	the integer lattice \cite{DHL19}.
	A shape theorem in every dimension
	and finer results in dimension one
	for the continuous-time  model
	have been obtained in \cite{stocCombust, CQR07, CQR09}.
	A possibility of explosion 
	for the continuous-time frog model
	with a general (not necessarily exponential) distribution
	of the time
	between jumps
	was demonstrated in \cite{frog1}.

	The results of this paper can be framed 
	in terms of
	the cover time, that is, 
	the time when every site of a graph is visited
	by an active particle. 
	Explosion means that the cover time 
	of $\Z$ 
	is finite for the continuous-time frog model;
	if no explosion occurs a.s., then the cover time 
	is infinite. 
	For the discrete-time 
	model the asymptotics of cover 
	time have been studied 
	on various finite graphs,
	in particular  trees \cite{FrogsOnTrees?, HJJ19}
	and  tori and sequences of expander graphs \cite{BFHM20}.
	
	In this paper we give sufficient conditions for explosion and non-explosion of the continuous-time 
	frog model. \Peter{In the proofs we rely on a comparison with a certain kind of auxiliary percolation model.} Using a similar proof technique, in \cite{frogL}
	the linear and superlinear spread 
	of the continuous-time frog model was studied.
	Further description of the technique can be found
	in Section~\ref{sec proof outline}.
	\Viktor{The results of this paper demonstrate 
		flexibility and versatility of the technique.
		We expect it to be applicable in various other settings when addressing the questions such as spread rate or explosion for stochastic growth models.}

	The paper is organized as follows. 
	In Section~\ref{sec results} we formulate and discuss the main results. In Section \ref{sec tadibp} an auxiliary percolation model is introduced. In Section \ref{sec proof outline} further discussion and the main ideas of the proof are collected. Sections \ref{sec non expl}
	and \ref{sec explosion proof} contain the proofs of non-explosion and explosion, respectively.

	\section{Main results and discussion} \label{sec results}

	In this section, we give sufficient conditions on the initial distribution $\mu$ of sleeping particles which lead to explosion or non-explosion.
	Let $A\colon\N \to(0,\infty)$
	be a non-decreasing function 
	which we interpret as a varying speed 
	for the continuous-time frog model. 
	We remark here that the word `speed' is used loosely in this paper. We mostly have in mind an average speed over a certain interval, that is, the ratio of the  distance covered to the  time elapsed 
	since the start of movement of one or several particles.

	For $i,j \in \N$,  set 
	$\AA(i): = \sum\limits_{z = 1} ^i \frac{1}{A(z)}$,
	$\AA(i,i+j): = \AA(i+j) - \AA(i) = \sum\limits_{z = i+1} ^{i+j} \frac{1}{A(z)}$, and $\AA(0)=0$. Furthermore,
	let $a_0 = 0$ and for $i\in\N$ 
	set $a_i := \frac{i!}{\left(\AA(i)\right) ^i}$.
	
	\begin{thm}\label{main_theorem}
		\begin{enumerate}
			\item[(i)] 
			Assume that
			\begin{equation}\label{A_series_diverges}
				\sum_{{z=1}}^\infty\frac{1}{A(z)}=\infty
			\end{equation}
			and
			\begin{equation}\label{skeevy = unpleasant, creepy}
				\sum_{i=0}^\infty\mu\left(\left[a_i,\infty\right)\right)<\infty.
			\end{equation}
			Then almost surely no explosion occurs.
			\item[(ii)] Assume that
			\begin{equation}\label{A_series_converges}
				\sum_{{z=1}}^\infty\frac{1}{A(z)}<\infty
			\end{equation}
			and there exists $\rho>1$ such that
			\begin{equation}\label{poprawa = improvement}
				\sum_{m=1}^\infty\prod_{i=1}^m\mu([0,A(m)^{{\rho}i}])<\infty.
			\end{equation}
			Then an explosion occurs almost surely.

		\end{enumerate}
	\end{thm}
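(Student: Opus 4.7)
The plan is to derive both parts from comparisons with the TADIBP model set up in Section~\ref{sec tadibp}. For (i) I couple the frog dynamics with a TADIBP whose open-site probabilities are governed by \eqref{skeevy = unpleasant, creepy}, so that non-percolation of the auxiliary model forces the frog front to travel at average speed no faster than $A$. For (ii) I build a second TADIBP in which \eqref{poprawa = improvement} guarantees the existence of densely populated sites close enough together to support a chain of ``fast jumps'' of summable total duration. Throughout I work on the positive half-line, the reduction being justified by the symmetry argument on Page~\pageref{page: explosion occurs in both directions}.

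\smallskip

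\textbf{Part (i).} The threshold $a_i = i!/\AA(i)^i$ should be read as the minimal occupation at which $a_i$ independent random walkers have a non-negligible chance of collectively covering distance $i$ within the cumulative speed budget $\AA(i)$; essentially a Chernoff-type estimate for the minimum of $a_i$ i.i.d.\ sums of $i$ unit-rate exponential clocks sits behind this definition. Call a site $x\in\N$ \emph{heavy} if $\eta(x)\ge a_x$; by independence of $(\eta(x))_{x\in\Z}$ and \eqref{skeevy = unpleasant, creepy}, the Borel--Cantelli lemma gives that almost surely only finitely many sites are heavy. In the TADIBP comparison, heavy sites correspond to open vertices that may initiate long reaches, while light sites only advance the front at the speed profile $A$. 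I then expect to conclude by induction along the front that the first-arrival time $T_n$ at site $n$ eventually dominates $\AA(n)$, and \eqref{A_series_diverges} forces $T_n\to\infty$.

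\smallskip

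\textbf{Part (ii).} Set $E_m := \{\eta(i)\le A(m)^{\rho i}\text{ for all }1\le i\le m\}$. By independence, $\P(E_m) = \prod_{i=1}^m\mu([0, A(m)^{\rho i}])$, so \eqref{poprawa = improvement} together with Borel--Cantelli gives that, almost surely, for every sufficiently large $m$ some site $i^\star = i^\star(m)\le m$ carries at least $A(m)^{\rho i^\star}$ sleeping particles. Once the front reaches such an $i^\star$, this population launches that many independent random walks, and a standard large-deviation estimate should give probability close to $1$ that at least one walk travels a distance of order $m - i^\star$ within a time of order $\AA(m) - \AA(i^\star)$, the extra exponent $\rho - 1 > 0$ providing the safety margin. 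Chaining these jumps along a sufficiently sparse subsequence $m_1 < m_2 < \cdots$, the total elapsed time is controlled by the tail of $\sum 1/A(z)$, which is finite by \eqref{A_series_converges}; this yields explosion.

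\smallskip

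\textbf{The main obstacle} in both directions is converting the TADIBP statements into quantitative bounds on first-arrival times. For (i) the difficulty is ruling out the possibility that many sub-threshold sites conspire to outrun the speed $A$; the exponent $i$ and the factor $i!$ in the definition of $a_i$ are precisely what encode the full distribution of a sum of $i$ Gamma variates rather than merely its mean, and the TADIBP comparison must be tight enough to exploit this. For (ii), dually, I must show that the abundance at $i^\star(m)$ guaranteed by \eqref{poprawa = improvement} is genuinely sufficient to propel a single trajectory to distance $m$ within the available window; this amounts to balancing the exponent $\rho i$ against the large-deviation cost of a ballistic trajectory of a simple random walk over the relevant distance.
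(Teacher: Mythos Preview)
Your sketch misidentifies what the index $i$ in $a_i$ and in $A(m)^{\rho i}$ is counting, and this derails both halves.

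\textbf{Part (i).} Reading $a_x$ as a threshold on the occupation \emph{at position} $x$ is not how the paper uses condition~\eqref{skeevy = unpleasant, creepy}. The quantity $a_i$ is (up to constants) the reciprocal of the probability that a \emph{single} random walk covers \emph{distance} $i$ within the time budget $\AA(i)$; see Lemma~\ref{waif = forsaken or orphaned child} and inequality~\eqref{thimble = cap for finger}. The paper never invokes the Borel--Cantelli event $\{\eta(x)\ge a_x\text{ i.o.}\}$. Instead it bounds, for every pair $(x,j)$, the chance that the particles at $x$ overshoot distance $j$ at speed $A$, and sums these to show $\inf_m \P\{m\text{ is dry}\}>0$ (Proposition~\ref{sleuth = detective}). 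This uniform positivity of the dry probability is exactly what rules out the ``conspiracy of sub-threshold sites'' you flag as the main obstacle; your heavy/light dichotomy does not, because a light site $x$ (with $\eta(x)<a_x$) can still carry, say, $a_{x/2}$ particles and thereby jump distance $x/2$ at speed $A$ with non-negligible probability. The paper also needs a separate argument (Corollary~\ref{skeeve out = being disgusted}) to dispose of particles starting left of the origin, which your symmetry reduction does not by itself provide.

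\textbf{Part (ii).} Your event $E_m=\{\eta(i)\le A(m)^{\rho i}\text{ for }1\le i\le m\}$ has the correct probability, but your reading of $E_m^c$ is the wrong one. If the witnessing index $i^\star$ is small, then $\eta(i^\star)>A(m)^{\rho i^\star}$ gives you only about $A(m)^{\rho i^\star}$ walkers at position $i^\star$, who must still cover distance $m-i^\star$; the single-walk cost for that is of order $A(m)^{-(m-i^\star)}$, so you would need $\rho i^\star\ge m-i^\star$, which nothing guarantees. The paper reads the same product with $i$ indexing distance from $m$: the factor $\mu([0,A(m)^{\rho i}])$ controls whether the site at position $m-i$ has enough particles to cross the remaining gap $i$ (Lemmas~\ref{eps_estimate}--\ref{shrew is a small animal}). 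With that orientation the large-deviation arithmetic balances, and the Borel--Cantelli conclusion feeds directly into Lemma~\ref{conv_perc} (percolation of the TADIBP). Your chaining step is also incomplete: even after reindexing, ``high probability'' of a single long jump does not by itself give an a.s.\ infinite chain with summable times; the paper extracts such a chain with controlled overlap via Lemma~\ref{perc_sequence} and Proposition~\ref{perc_expl}.
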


	\begin{rmk}
		If $A$ is bounded and \eqref{skeevy = unpleasant, creepy} holds, then 
		by \cite[Theorem 1.2 (i)]{frogL}
		not only a.s.\ no explosion occurs,
		but we  know even that the spread is a.s.\ linear. 
		On the other hand, condition \eqref{poprawa = improvement}
		resembles the conditions in \cite[Theorem 1.2 (ii)]{frogL}.
	\end{rmk}
	
	Condition \eqref{skeevy = unpleasant, creepy} 
	is  shown in Section \ref{sec non expl} to imply that in a certain sense `many'  sites $z \in \N$
	are traveled over at  speed below $A(z)$. Together with \eqref{A_series_diverges} this is then shown to imply  the absence of an explosion a.s. 
	On the other hand, \eqref{poprawa = improvement}
	is used in Section \ref{sec explosion proof} to 
	obtain that in some sense `most' sites $z \in \N $
	are traveled over at speed exceeding $A(z)$. 
	This together with \eqref{A_series_converges} 
	is then shown to imply a.s.\ explosion. 
	A deeper discussion of the proof ideas can be found in Section~\ref{sec proof outline}.
	Note that since $\mu$ is concentrated 
	on $\Z_+$, the function
	$\rho \mapsto \sum_{m=1}^\infty\prod_{i=1}^m\mu([0,A(m)^{\rho i}])$
	is non-decreasing,
	therefore condition
	\eqref{poprawa = improvement}
	is stronger than 
	\begin{equation*}
		\sum_{m=1}^\infty\prod_{i=1}^m\mu([0,A(m)^{i}])<\infty.
	\end{equation*}

	For non-explosion we need to control the tails of the initial condition \eqref{skeevy = unpleasant, creepy} so that there are not 
	too many dormant frogs at the beginning.
	On the other hand, in the condition for explosion we 
	require
	of the initial distribution 
	to be sufficiently heavy \eqref{poprawa = improvement}. 
	Taking $A(x) = \frac{1}{\ln (x+1) - \ln x }$
	in Theorem \ref{main_theorem}, (i), we get 
	\begin{cor}
		Assume that 
		\begin{displaymath}
			\sum_{i=1}^\infty\mu\left(\left[\frac{i!}{(\ln(i+1))^i},\infty\right)\right)<\infty,
		\end{displaymath}
		or equivalently 
		\begin{displaymath}
			\sum_{i=2}^\infty\mu\left(\left[\frac{i!}{(\ln i)^i},\infty\right)\right)<\infty.
		\end{displaymath}
		Then a.s.\ no explosion occurs.
	\end{cor}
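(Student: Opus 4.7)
The plan is to apply Theorem~\ref{main_theorem}(i) with the choice $A(x) := 1/(\ln(x+1) - \ln x)$. The function $A$ is strictly positive on $\N$ and non-decreasing, since $x \mapsto \ln(1 + 1/x)$ is positive and decreasing. The decisive observation is the telescoping identity
\begin{equation*}
\AA(i) = \sum_{z=1}^{i}\bigl(\ln(z+1)-\ln z\bigr) = \ln(i+1),
\end{equation*}
which simultaneously gives condition \eqref{A_series_diverges} (since $\ln(i+1)\to\infty$) and identifies $a_i = i!/(\ln(i+1))^i$. Substituting this into \eqref{skeevy = unpleasant, creepy} produces exactly the first displayed condition of the corollary, so Theorem~\ref{main_theorem}(i) yields a.s.\ non-explosion.

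It remains to verify that the two displayed sums are summable together. Since $\ln(i+1) > \ln i$ for $i \geq 2$, we have $i!/(\ln(i+1))^i < i!/(\ln i)^i$, so monotonicity of $t \mapsto \mu([t,\infty))$ gives one implication term by term. For the converse, I would estimate the ratio
\begin{equation*}
\frac{i!/(\ln(i+1))^i}{(i-1)!/(\ln(i-1))^{i-1}} = \frac{i}{\ln(i+1)}\left(\frac{\ln(i-1)}{\ln(i+1)}\right)^{i-1},
\end{equation*}
and show, using $\ln((i+1)/(i-1)) = O(1/i)$, that the second factor tends to $1$ (by the bound $(1-\tfrac{2}{(i-1)\ln(i+1)})^{i-1} \geq 1 - \tfrac{2}{\ln(i+1)}$) while the first factor tends to $\infty$. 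Hence for $i$ sufficiently large the ratio exceeds $1$, so
\begin{equation*}
\mu\bigl(\bigl[i!/(\ln(i+1))^i,\infty\bigr)\bigr) \leq \mu\bigl(\bigl[(i-1)!/(\ln(i-1))^{i-1},\infty\bigr)\bigr),
\end{equation*}
and summing (up to an index shift) converts summability of the second displayed condition into summability of the first.

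The main content is really just the identification of $A(x) = 1/(\ln(x+1) - \ln x)$ as the natural choice making $\AA(i)$ telescope to $\ln(i+1)$; everything else is a routine comparison of two sequences that agree asymptotically up to subexponential factors, and no frog-model-specific argument enters beyond the direct citation of Theorem~\ref{main_theorem}(i). The only mild obstacle is being careful with the index shift when passing from one logarithmic normalization to the other.
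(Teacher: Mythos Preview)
Your proof is correct and follows exactly the paper's approach: the paper also derives the corollary by taking $A(x)=\frac{1}{\ln(x+1)-\ln x}$ in Theorem~\ref{main_theorem}(i), which makes $\AA(i)=\ln(i+1)$ telescope and hence $a_i=i!/(\ln(i+1))^i$. You additionally spell out the equivalence of the two displayed conditions (which the paper merely asserts), and your index-shift comparison via the ratio $\frac{i}{\ln(i+1)}\bigl(\frac{\ln(i-1)}{\ln(i+1)}\bigr)^{i-1}\to\infty$ is a clean way to do it.
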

	
	Applying the inequality $1 - a \leq e^{-a}$, $a \geq 0$,
	to the left hand side of \eqref{poprawa = improvement}, we get
	\begin{cor}\label{perusal =  read and examine}
		Assume that \eqref{A_series_converges} 
		holds and for some
		$\rho>1$ 
		\begin{equation}\label{straight-laced = strict morals}
			\sum_{m=1}^\infty
			\exp\Big\{- \sum_{i=1}^m \mu((A(m)^{{\rho}i}, \infty)) \Big\}
			<\infty.
		\end{equation}
		Then an explosion occurs almost surely.
	\end{cor}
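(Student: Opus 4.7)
My plan is to derive Corollary \ref{perusal = read and examine} directly from Theorem \ref{main_theorem}(ii) by showing that the hypothesis \eqref{straight-laced = strict morals} implies the hypothesis \eqref{poprawa = improvement} for the same $\rho > 1$. Since \eqref{A_series_converges} is already assumed in both statements, this is the only thing that needs to be checked.

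The key step is the elementary inequality $1 - a \leq e^{-a}$ for $a \geq 0$, precisely as suggested in the sentence introducing the corollary. Applying it with $a = \mu((A(m)^{\rho i}, \infty))$, and using $\mu([0, A(m)^{\rho i}]) = 1 - \mu((A(m)^{\rho i}, \infty))$ (which holds because $\mu$ is concentrated on $\Z_+$ and $A(m)^{\rho i} \geq 0$), one obtains the pointwise bound
\[
\mu([0, A(m)^{\rho i}]) \leq \exp\bigl(-\mu((A(m)^{\rho i}, \infty))\bigr)
\]
for every $m \in \N$ and every $i \in \{1, \ldots, m\}$. Taking the product over $i = 1, \ldots, m$ then converts the sum of exponents into a single exponential of a sum, yielding
\[
\prod_{i=1}^m \mu([0, A(m)^{\rho i}]) \leq \exp\Bigl\{-\sum_{i=1}^m \mu((A(m)^{\rho i}, \infty))\Bigr\}.
\]

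Summing over $m \geq 1$ and invoking \eqref{straight-laced = strict morals} shows that the series in \eqref{poprawa = improvement} is dominated term-by-term by a finite series and is therefore itself finite, with the same $\rho$. Theorem \ref{main_theorem}(ii) then delivers the almost sure explosion. No real obstacle arises: the argument is the standard passage from a product of near-one probabilities to an exponential upper bound, and the corollary is essentially a repackaging of Theorem \ref{main_theorem}(ii) in a form that is often easier to check (for instance when $\mu$ has sufficiently heavy tails so that the inner sums grow without bound).
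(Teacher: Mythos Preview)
Your proof is correct and follows exactly the approach the paper indicates in the sentence preceding the corollary: apply $1-a \le e^{-a}$ with $a=\mu((A(m)^{\rho i},\infty))$, take the product over $i$, sum over $m$, and invoke Theorem~\ref{main_theorem}(ii).
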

	
	In the proof we link the frog model
	to an asymmetric inhomogeneous percolation.
	This is described in more detail in Sections 
	\ref{sec tadibp} and
	\ref{sec proof outline}.

	\begin{rmk}
		\Viktor{
			In this paper we focus on the one-dimensional
			process. 
			A coupling 
			argument 
			(\cite[Lemma 4.1]{stocCombust} or  \cite[Proposition 2.3]{frogL})
			implies  
			that whenever explosion occurs on $\Z$ with probability one
			it also occurs   on $\Z ^d$, $d \in \N$, with probability one.
			We expect furthermore that explosion on $\Z$ implies  explosion on regular trees by a similar coupling argument.
			Whether explosion on $\Z$ implies explosion for every connected graph with finite degrees
			containing a copy of the integer line as a subgraph
			is not clear.
	}\end{rmk}
	
	\begin{rmk}
		Denote by $\mathscr{F}_n$ the $\sigma$-algebra generated
		by the walks of the particles started 
		from within the set $\{-n,-n+1, ..., n\}$, that is
		\begin{equation}\label{sich in die Sonne liegen}
			\mathscr{F}_n = \sigma \{S _{t} ^{(x ,j)}, \eta(x), t\geq 0, -n\leq x\leq n,
			1 \leq j \leq \eta (x) \}. 
		\end{equation}
		The  event $\{\text{explosion occurs}\}$
		is in $\sigma \big(\bigcup_{n \in \N} \mathscr{F}_n \big)$
		and yet  independent of $\mathscr{F}_n$ for every $n \in \N$. 
		Therefore by the $0$-$1$ law, $\P \{\text{explosion occurs}\} \in \{0,1 \}$ for any distribution $\mu$.
	\end{rmk}
	We close the discussion of our result by providing 
	\Viktor{explicit examples of $\mu$ leading to explosion or non-explosion}.
	\begin{Example}
		Assume that for some $a \in (0,1)$ for large $b>0$ 
		\begin{equation}\label{ersichtlich = evident, apparent}
			\mu( (b, \infty)) \geq (\ln b)^{-a}. 
		\end{equation}
		Then by taking $A(n) = n^{\alpha}$ with $\alpha > 1$ we find 
		for $m \in \N$
		\begin{multline*}
			\sum_{i=1}^m \mu((A(m)^{{\rho}i}, \infty)) 
			= 
			\sum_{i=1}^m \mu((m^{ \alpha \rho i}, \infty)) 
			\geq  \sum_{i=1}^m \frac{1}{(\alpha \rho i \ln m )^{a}}
			\\
			= \frac{1}{(\alpha \rho  \ln m )^{a}}
			\sum_{i=1}^m \frac{1}{i^a}
			\geq \frac{1}{(\alpha \rho  \ln m )^{a}} \frac{c m ^{1-a}}{1-a}
		\end{multline*}
		for some $c> 0$. Hence 
		\begin{equation*}
			\sum_{m=1}^\infty
			\exp\Big\{- \sum_{i=1}^m \mu((A(m)^{{\rho}i}, \infty)) \Big\}
			\leq \sum_{m=1}^\infty
			\exp\Big\{- 
			\frac{c m ^{1-a}}{(1-a)(\alpha \rho  \ln m )^{a}}
			\Big\} < \infty.
		\end{equation*}
		By Corollary \ref{perusal =  read and examine} an explosion occurs a.s.
		\Viktor{It follows  that if $e^X \sim \mu$ ($\mu$ is the distribution 
			of $e^X$), where $X$ is a Pareto distribution with 
			$$
			\P \{ X \geq t \} \leq \frac{1}{t^a}, 
			\ \ \ t \geq 1
			$$
			for $a \in (0,1)$,
			then explosion occurs a.s.
		}
		Note that condition \eqref{ersichtlich = evident, apparent} is much weaker than 
		the explosion condition in \cite[Remark 2.5]{frog1} \Peter{, which was given by
			\begin{displaymath}
				\mu([2^{4n^2+1}n^{8n^2+1},\infty))\geq\frac{1}{n-1}\text{ for }n\geq 2.
			\end{displaymath}
		}
	\end{Example}
	\begin{Example}
		\Viktor{    Assume that  $e^{Y \ln Y} \sim \mu$, 
			where a non-negative random variable $Y$ has a finite expectation.
			Take $A(x) = \frac{1}{\ln (x+1) - \ln x }$. We write for large $i$ 
			$$a_i = \frac{i!}{(\ln(i+1))^i} \geq e^{0.95i \ln i}$$  and 
			\begin{equation*}
				\mu \big( [a_i, \infty ) \big) = \P \{ e^{Y \ln Y} \geq  a_i \} 
				\leq \P \{   e^{Y \ln Y}  \geq  e^{ 0.95 i \ln i} \}
				\leq  \P \{   Y  \geq  0.9 i  \}.
			\end{equation*}
			Hence 
			\eqref{skeevy = unpleasant, creepy} holds,
			and Theorem \ref{main_theorem}, (i), implies that no explosion occurs a.s.}
	\end{Example}
	\begin{rmk}
		\Viktor{ Let us place two preceding examples in the context of \cite{frogL}: we know that 
			if $\E Y < \infty$ for a non-negative random variable $Y$
			and $e^{Y} \sim \mu$,
			then the spread is linear in time \cite[Theorem 1.2, (i)]{frogL}. On the other hand, 
			\cite[Theorem 1.2, (iii)]{frogL} implies
			that the spread is superlinear if
			$\E Y = \infty$ and
			$e^{Y \ln ^2 Y} \sim \mu$. }
	\end{rmk}

	\section{Totally asymmetric discrete inhomogeneous Boolean percolation (TADIBP)}
	\label{sec tadibp}
	
	In this section we introduce the percolation process which is used to analyze the explosion of the frog model.  We introduce a general TADIBP model which is used in Section \ref{sec proof outline} to define a percolation process corresponding to the frog model.
	Let $\{\psi _z\}_{z \in \Z}$ be a collection of
	independent 
	$\Z _+$-valued random  variables
	with distributions  $p_k ^{(z)} = \PP{\psi _z = k}$, $z \in \Z$.
	We consider a germ-grain model
	with germs at the sites of  $\Z$
	and grains of the form $[x, x+\psi _x]$.
	The distribution of a $\Z_+$-valued random variable
	$\psi _x$ depends on the location $x$,
	hence the model is inhomogeneous in space. 
	Germ-grain models are well 
	known and typically treated in homogeneous settings
	\cite[Section 6.5]{MeckeBook13}.
	The spatially homogeneous version of the model 
	we present below was introduced by Lamperti \cite{Lamp70}
	and was later considered  
	in \cite{KW06} and \cite{Zer18}. 
	A continuous-space version of the model 
	is treated in \cite{TABP}.
	We follow the 
	interpretation introduced in \cite{Lamp70}:
	At each site $x$, there is a fountain
	that wets integer sites in the interval
	$(x, x+\psi _x]$.

	We say that $x , y \in \Z$, $x \leq y$, are directly connected  (denoted by $x \xrightharpoondown{\Z}  y$)
	if there exists $z \leq x$, $z \in \Z$, such that $z + \psi _z \geq y$.
	We say that $x$  and $y$ are connected (denoted by $x \xrightarrow{\Z} y$)
	if they are directly connected, 
	or
	if there exists $z_1 \leq ... \leq z_n \in \Z$,
	$z_1 \leq x$, $z_n \leq y$, such that $x \in [z_1, z_1 + \psi _{z_1}]$, 
	$y \in [z_n, z_n + \psi _{z_n}]$,
	and $z_{j+1} \in [z_j, z_j + \psi _{z_j}]$ for $j = 1,2, ..., n-1$,
	or, equivalently, 
	$$x\xrightharpoondown{\Z}z_2\xrightharpoondown{\Z}\dotsb\xrightharpoondown{\Z}z_n\xrightharpoondown{\Z}y.$$
	For a subset $Q \subset \Z$,
	$x \xrightharpoondown{Q}  y$
	and 
	$x \xrightarrow{Q} y$ are
	defined in the same way with an additional requirement 
	that $x, y, z,  z_1, ..., z_n \in Q$ (in this paper we only consider 
	$Q = \Z$ and $Q = \Z _+$).
	We say that $x \in \Z$
	is \emph{wet} if
	the interval $[x-1,x]$
	is contained in $[y, y + \psi _y]$
	for some $y \in \Z$.
	In other words, 
	$x \in \Z$
	is {wet} if for some $y\in \Z $,  $y < x$ and $y + \psi _y \geq x$.
	The sites that are  not wet are said to be dry. 
	Note that
	$x$ is wet if and only if $x-1$ and $x$ are connected.
	We call the resulting random structure
	totally asymmetric discrete inhomogeneous Boolean
	percolation (TADIBP).
	When considering TADIBP on $\Z _+$,
	we also talk about `wet' sites, 
	with the understanding that both $x$ and $y$
	are required to be from $\Z_+$.
	Also,  
	we consider the origin
	to be wet for  TADIBP on $\Z_+$.

	\begin{defi}\label{Aufwand = Effort}
		For $m\in\LZ_+$, denote by $Y_m$ the difference between the rightmost site directly connected to $m$ and $m$, i.e.
		\begin{displaymath}
			Y_m=\max\set{l\colon m\xrightharpoondown{\LZ_+} l}-m.
		\end{displaymath}
	\end{defi}
	By definition, $m\xrightharpoondown{\LZ_+}m$ and hence, $Y_m\geq 0$. Also, by construction, $Y_0=\psi_0$ and for $m\in\LN$,
	\begin{displaymath}
		Y_m=\psi_m\lor(\psi_{m-1}-1)\lor\dots\lor(\psi_1-m+1)\lor(\psi_0-m).
	\end{displaymath}
	We say that $x$ is connected to infinity, denoted by  
	$x\xrightarrow{\Z_+} \infty $,
	if $x\xrightarrow{\Z_+} y $
	for every $y > x$.
	Note that for $x \in \Z_+$, 
	$x\xrightarrow{\Z_+} \infty $ if and only 
	if $Y _m > 0$ for all $m \geq x$.
	\begin{defi}
		We say that a system $\{\psi_x\}$ of random variables of the TADIBP percolates if there exists $x_0\in\LZ_+$ such that $x_0\xrightarrow{\Z_+}\infty$.
	\end{defi}
	

	
	\Viktor{The following lemma is \cite[Lemma 3.8]{frogL} with a typo corrected. For completeness we also give the proof.
		\begin{lem}\label{perc_sequence}
			Let $x \in \N$. A.s.\ on $\{ x \xrightarrow{ \Z _+ } \infty \}$,
			every site $y > x$ is wet, and there exists
			a (random) sequence $ x_0 < x_1 < x_2 < \dots $, $x_i \in \Z_+$,
			such that $x_0 \leq x < x_1$ and for every $i \in \Z _+$
			\begin{equation}\label{slink}
				x _{i + 1} \leq x _ i + \psi _{x _ i} <  x _{i + 2}.
			\end{equation}
			In particular, every $z \geq x$  belongs to no more than two
			intervals of the type $[x_i, x_i + \psi _{x _ i} ]$, $i \in \Z _+$.
		\end{lem}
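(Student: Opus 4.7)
The plan is two-pronged: first deduce the wetness statement from the already-stated characterization $\{x \xrightarrow{\Z_+} \infty\} = \{Y_m > 0 \text{ for all } m \geq x\}$, then construct the sequence $(x_i)$ greedily and justify it by an interlocking induction.

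The wetness of every $y > x$ is a one-liner: setting $m = y - 1 \geq x$ in $Y_m > 0$ produces $z \leq y - 1$ with $z + \psi_z \geq y$, so $y$ is wet.

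For the sequence, set $M_j := x_j + \psi_{x_j}$ with the convention $M_{-1} := x$. Pick $x_0$ as any maximizer of $z \mapsto z + \psi_z$ on $\{0, 1, \dots, x\}$, which yields $M_0 > x$ since $Y_x \geq 1$. Recursively, let $x_{i+1}$ be any maximizer of $z \mapsto z + \psi_z$ on $\{z \in \Z_+ : M_{i-1} < z \leq M_i\}$, and set $M_{i+1} := x_{i+1} + \psi_{x_{i+1}}$. I would run two inductive claims in tandem: the \emph{max-reach claim} $(A_i)$, that $z + \psi_z \leq M_i$ for every $z \leq M_{i-1}$, and the \emph{strict growth claim} $(B_i)$, that $M_{i+1} > M_i$ (which also provides non-emptiness of the search range at the next step). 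The base case $(A_0)$ is the definition of $x_0$. Given $(A_i)$, wetness of $M_i + 1$ (which is $> x$, hence wet by the first part of the lemma) produces some $z \leq M_i$ with $z + \psi_z \geq M_i + 1$; $(A_i)$ forces $z > M_{i-1}$, so the greedy choice gives $M_{i+1} \geq z + \psi_z > M_i$, proving $(B_i)$. Then $(A_{i+1})$ follows by splitting $z \leq M_i$ into $z \leq M_{i-1}$ (combining $(A_i)$ with $M_i < M_{i+1}$) and $z \in (M_{i-1}, M_i]$ (the range over which $x_{i+1}$ is maximal).

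The conclusions of the lemma then drop out of the construction: $x_0 \leq x$ and $x_1 > M_{-1} = x$ by choice of domain, $x_{i+1} > M_{i-1} \geq x_i$ gives strict increase of the sequence $(x_i)$, and $x_{i+1} \leq M_i < x_{i+2}$ is built in. The two-interval bound then follows immediately: if $z \in [x_i, M_i] \cap [x_j, M_j]$ with $j > i$, then $x_j \leq z \leq M_i < x_{i+2}$, forcing $j = i + 1$. The main obstacle is the interlocking induction; once the left endpoint of the search range is taken to be $M_{i-1}$ rather than the looser $x_i$, everything else is routine bookkeeping.
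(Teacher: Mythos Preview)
Your proof is correct and follows essentially the same greedy strategy as the paper: pick $x_{i+1}$ to maximize $z+\psi_z$ over a suitable interval ending at $M_i=x_i+\psi_{x_i}$, then argue that the reaches $M_i$ are strictly increasing using wetness of $M_i+1$.

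The one genuine difference is your choice of search interval. The paper takes $x_{i+1}$ to be the (rightmost) maximizer over $(x_i,M_i]$, whereas you search over the narrower interval $(M_{i-1},M_i]$. Your choice pays off: the max-reach invariant $(A_i)$ --- that every $z\le M_{i-1}$ has $z+\psi_z\le M_i$ --- is exactly what is needed to force the witness for wetness of $M_i+1$ into your search range, and it propagates cleanly by splitting $[0,M_i]$ into $[0,M_{i-1}]$ and $(M_{i-1},M_i]$. The paper's terser argument (``otherwise $x_i+\psi_{x_i}+1$ would not be wet'') implicitly relies on the same invariant but does not spell out why no $z\le x_i$ can reach past $M_i$; your interlocking induction makes this step explicit. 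Either construction produces a sequence with the required property \eqref{slink}, and the two-interval covering bound follows identically.
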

		\begin{proof}
			By definition of $\xrightarrow{ \Z _+ }$, 
			every site $y > x$ is wet a.s.\ on $\{ x \xrightarrow{ \Z _+ } \infty \}$.
			Define the elements of the sequence $\{ x_i\}_{i \in \Z_+}$
			consecutively by setting 
			$$
			x_0 = \max\big\{y \in [0, x_0 ] \cap \N : y + \psi _y = \max\{z + \psi _z: z = x_0, x_0-1, ...,0 \}   \big\}
			$$
			and letting for $i \in \Z_+$
			\begin{equation}
				x_{i+1} = \max\big\{y \in [x_i + 1, x_i + \psi _{x _ i} ] \cap \N : y + \psi _y = \max\{z + \psi _z: z = x_i + 1, \dots, x _i + \psi _{x _i} \}   \big\}.
			\end{equation}
			In other words, $x_{i+1} \in [x_i + 1, x_i + \psi _{x _ i} ]$ is 
			characterized by two properties:
			\begin{itemize}
				\item[(i)]
				for every 
				$z \in [x_i + 1, x_i + \psi _{x _ i} ] \cap \N  $, 
				\begin{equation*}
					x_{i+1} + \psi _{x _{i+1}} \geq z + \psi _z,
				\end{equation*}
				\item[(ii)]
				and for every $z' \in [x_{i+1} + 1, x_i + \psi _{x _ i} ] \cap \N  $,
				\begin{equation*}
					x_{i+1} + \psi _{x _{i+1}} > z' + \psi _{z'}
				\end{equation*}
			\end{itemize}
			(here $[a,b] = \varnothing$ if $a > b$).
			By construction, $x _{i + 1} \leq x _ i + \psi _{x _ i}$, so 
			the left inequality in \eqref{slink} holds.
			A.s.\ on $\{ x \xrightarrow{ \Z _+ } \infty \}$, 
			$x_{i+1} + \psi _{x _{i+1}} >  x_i + \psi _{x _ i}$, because otherwise
			$x_i + \psi _{x _ i} + 1$ would not be wet.
			Hence a.s.\ on $\{ x \xrightarrow{ \Z _+ } \infty \}$ also $x_{i+2} + \psi _{x _{i+2}} >  x_{i+1} + \psi _{x _{i+1}} $. 
			Therefore the inequality  $    x _{i + 2} \leq x _ i + \psi _{x _ i} $
			is impossible a.s.\ on $\{ x \xrightarrow{ \Z _+ } \infty \}$ because it would contradict to (i) with $z = x _{i + 2}$.
		\end{proof}
	}
	
	\section{Notation, preliminaries, and further discussion}\label{sec proof outline}

	For each $x\in\Z$ and $j\in\N$, we denote by $\Viktor{ (S_t^{(x,j)})_{t \geq 0} }$ a simple
	symmetric continuous-time 
	random walk starting at $S_0^{(x,j)}=0$.
	We assume that the collection
	\begin{displaymath}
		\{S_t^{(x,j)},x\in\Z,j\in\N\}
	\end{displaymath}
	is i.i.d. For $m,n\in\N$, denote $\overline{m,n}=[m,n]\cap\Z$.
	For $t \geq 0$, $x \in \Z$, and $j \in \N$,
	the number
	$x + S_t^{(x,j)}$ is the position 
	of $j$-th particle started at location $x$,
	$t$ units of time after the sleeping particles
	at $x$ were activated.
	Let $(S_t, t \geq 0)$
	be a simple continuous-time random walk
	on $\Z$ and $\tau _k$
	be the $k$-th jump of $(S_t, t \geq 0)$,
	$\tau _0 = 0$.

	For two series $\sum\limits _n a_n $ and $\sum\limits _n b_n $
	with non-negative elements
	we write $\sum\limits _n a_n \simeq  \sum\limits _n b_n  $
	if they have the same convergence properties, that is,
	they either both converge or both diverge. 
	We write $\sum\limits _n a_n \precsim  \sum\limits _n b_n  $
	if  $\sum\limits _n b_n $ diverges, or if 
	both  $\sum\limits _n a_n $ and
	$\sum\limits _n b_n $ converge.
	This is true for example if $a_n \leq b_n$
	for large $n \in \N$ (but not necessarily for all $n \in \N$).
	We say that two events $A$ and $B$ are equal a.s., or coincide a.s., if  $\1 _A = \1 _B$ holds a.s.
	Multiplication takes precedence over
	taking maximum and minimum: 
	for $a,b,c \in \R$, $ab\vee c = (ab)\vee c$,
	$ab\wedge c = (ab)\wedge c$.
	
	As an auxiliary tool we consider the following construction of a TADIBP. Recall that $\{S_t^{(x,j)}\}$ are the random walks assigned to individual particles in the frog model with initial configuration \Peter{$(\eta(x))_{x\in\Z}$}, and let $A\colon \N \to (0, \infty)$ be a non-decreasing function. We define the random variables
	
	\begin{equation}\label{secrete}
		\ell ^{(A)} _x = \max \Big\{ k \in \Z _+:
		\exists t >0, j \in \overline{1, \eta (x)} \text{ such that } 
		t \leq \sum\limits^{x + S _{t} ^{(x ,j)}}_{z = x+1 } \frac{1}{A(z)} \text{ and } S _{t} ^{(x ,j)} \geq k  \Big\} \vee 0.
	\end{equation}
	(here as usual $\max \varnothing = -\infty $).

	We consider TADIBP with $\psi _x = \ell^{(A)}_x$.
	Heuristically, sites $x\in\Z$ which are wet in the TADIBP model are traversed by frogs at speed no less than $A(x)$. Therefore, if
	\eqref{A_series_converges} holds and (almost)
	all sites of the TADIBP are wet, it means that frogs traverse the space $\Z$ at high speed, leading to explosion of the system. Conversely,
	\eqref{A_series_diverges} and many
	dry sites imply that the frog model travels at low speed, leading to non-exploding expansion.

	Since $A$ is non-decreasing, we have
	\begin{displaymath}
		\fP\{ \ell ^{(A)}_x\geq k \}\geq\fP\{\ell ^{(A)}_{x+1}\geq k\},
		\ \ \ x \in \N, k \in \Z_+.
	\end{displaymath}

	\begin{rmk}
		The random variable
		$\ell ^{(A)}_x$ can be seen as the maximal distance travelled to the right
		by a particle 
		starting from $x$ 
		at a speed exceeding the given (varying) speed $A$.
	\end{rmk}

	The following elementary lemma is used throughout the paper.
	In particular, it can be applied to the Poisson distribution.
	\begin{lem} \label{reticent}
		Assume that for a sequence of positive numbers $\{\alpha _j\}_{j \in \N}$ there
		exist  $r \in (0,1)$ and $n \in \N$ such that 
		either for all $i \geq n$ 
		\begin{equation}\label{romp}
			\frac{\alpha _{i+1}}{\alpha_i} \leq r
		\end{equation}
		or for all $ i \in \N $
		\begin{equation}\label{romp2}
			\alpha _{n+i} \leq r ^i \alpha_n.
		\end{equation}
		Then there exists $C_{n, r}>1$ such that
		for $m \in \N$
		\begin{equation}\label{645572235432}
			\sum\limits _{i = m} ^\infty \alpha _i  \leq C_{n, r} \alpha _m.
		\end{equation}
		The constant $C_{n, r}$ can be chosen to depend only on $r$ and $n$.
	\end{lem}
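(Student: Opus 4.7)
The proof is a routine geometric-series estimate; the main observation is that hypothesis \eqref{romp} in fact implies the iterated version \eqref{romp2}, so a single unified argument handles both cases.

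First I would verify the implication \eqref{romp} $\Rightarrow$ \eqref{romp2}: iterating $\alpha_{i+1} \le r\alpha_i$ from $i=n$ upwards yields $\alpha_{n+k} \le r^k \alpha_n$ for every $k \in \N$. The same iteration started from an arbitrary $m \ge n$ gives the local form $\alpha_{m+k} \le r^k \alpha_m$ for $k \ge 0$, which is what drives the main estimate.

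Under \eqref{romp} and for $m \ge n$, the conclusion follows immediately by summing the geometric progression:
\begin{displaymath}
\sum_{i=m}^\infty \alpha_i \;=\; \sum_{k=0}^\infty \alpha_{m+k} \;\le\; \alpha_m \sum_{k=0}^\infty r^k \;=\; \frac{\alpha_m}{1-r},
\end{displaymath}
so the value $C_{n,r} = 1/(1-r)$ works in this range. Under \eqref{romp2} alone the same computation at the base index gives $\sum_{i=n}^\infty \alpha_i \le \alpha_n/(1-r)$.

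For the remaining indices $1 \le m < n$, I would split $\sum_{i=m}^\infty \alpha_i = \sum_{i=m}^{n-1}\alpha_i + \sum_{i=n}^\infty \alpha_i$, bound the tail by $\alpha_n/(1-r)$ via the previous step, and enlarge $C_{n,r}$ to absorb the finite initial segment (at most $n-1$ positive terms), keeping the constant a function of $n$ and $r$ only. There is no real obstacle here: the argument is entirely elementary and the only delicate point is the bookkeeping for small $m$; the heart of the proof is the one-line geometric series estimate above.
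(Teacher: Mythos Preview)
Your argument under hypothesis \eqref{romp} is complete and matches the paper's approach: iterate the ratio bound to get the local geometric decay $\alpha_{m+k}\le r^k\alpha_m$ for $m\ge n$, sum the geometric series, and absorb the finitely many indices $m<n$ into the constant. There is, however, a genuine gap in the case of hypothesis \eqref{romp2} \emph{alone} for indices $m>n$. You establish $\sum_{i\ge n}\alpha_i\le\alpha_n/(1-r)$ at the base index and then treat $1\le m<n$ by splitting, but nothing in your write-up bounds $\sum_{i\ge m}\alpha_i$ by a multiple of $\alpha_m$ when $m>n$. The point is that \eqref{romp2} only anchors the decay at $n$, i.e.\ $\alpha_{n+i}\le r^i\alpha_n$; it does \emph{not} give the local form $\alpha_{m+k}\le r^k\alpha_m$ that your geometric-series step needs.

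In fact this gap cannot be repaired: the lemma as stated is false under \eqref{romp2} alone with a constant depending only on $n$ and $r$. For instance, with $n=1$, $r=1/2$, take $\alpha_1=1$, $\alpha_2=\varepsilon$, and $\alpha_k=2^{-(k-1)}$ for $k\ge 3$; then \eqref{romp2} holds but $\big(\sum_{i\ge 2}\alpha_i\big)/\alpha_2=(\varepsilon+\tfrac12)/\varepsilon\to\infty$ as $\varepsilon\downarrow 0$. (One can even build a single sequence with $\sup_m$ infinite.) The paper's own proof glosses over the same step, jumping from the bound at $i=n$ to ``$\sup_m<\infty$'' without justification; but every application of the lemma in the paper (the Poisson tail in \eqref{awol}, the sequence $1/a_i$ in Lemma~\ref{lemma peter wants a label here}, the sequence $s_k$ in Lemma~\ref{waif = forsaken or orphaned child}) actually verifies the stronger hypothesis \eqref{romp}, so the main results are unaffected.
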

	\begin{proof}
		Note that \eqref{romp} implies \eqref{romp2},
		and by \eqref{romp2} 
		\[
		\sum\limits _{i = n}^\infty \alpha _i \leq \sum\limits _{i = n}^\infty r^n\alpha _n
		=\frac{\alpha _n}{1-r}.
		\]
		Therefore $\sup\limits _{m \in \N} \frac{\sum\limits _{i = m} ^\infty \alpha _i}{\alpha _m} < \infty$, that is, \eqref{645572235432} holds
		for some $C>0$.
	\end{proof}
	
	In the article \cite{frogL} the authors described conditions 
	for the distinction between linear and superlinear spread. To this end, for a fixed $B>0$ they used the family $(\psi_x)_x$ given by
	\begin{displaymath}
		\psi_x=\max\set{y\geq x\colon\exists j,\exists t\colon\frac{S_t^{(x,j)}}{t}\geq B,S_t^{(x,j)}\geq y-x}.
	\end{displaymath}
	The expression 
	on the right hand side
	coincides 
	with our definition
	of 
	$\ell ^{(A)}_x$ in \eqref{secrete}
	with
	$A(x) \equiv B$.
	
	These random variables give 
	rise to totally asymmetric 
	discrete (homogeneous)
	Boolean percolation 
	as described in Section 
	\ref{sec tadibp}.
	The 
	proofs in \cite{frogL}
	rely on the following 
	statements.

	\begin{itemize}
		\item If percolation occurs for every constant $B>0$, then the spread is superlinear.
		\item If a positive fraction 
		of sites is dry for some $B>0$, then the spread is linear.
	\end{itemize}
	The conditions implying
	that percolation occurs,
	or that it does not occur,
	are then given in terms of 
	the distribution 
	of the initial number of 
	particles $\mu$.

	In this paper, the goal is to describe conditions separating the non-explosion and explosion as opposed to the linear and superlinear spread. The idea is to modify the family $(\psi_x)_x$
	so that 
	$\psi _x = \ell ^{(A)}_x$
	with an increasing function $A$
	as defined in \eqref{secrete}. Since $A$ is the ``speed'' at which the process propagates,
	the following statements should hold.
	
	\begin{itemize}
		\item If percolation 
		occurs for some $A$ with
		\begin{displaymath}
			\sum\frac{1}{A(x)}<\infty,
		\end{displaymath}
		then the process explodes.
		\item If percolation 
		does not
		occur for some $A$ with
		\begin{displaymath}
			\sum\frac{1}{A(x)}=\infty,
		\end{displaymath}
		then the process does not explode.
	\end{itemize}

	\begin{rmk}
		Note the similarity to ODE: Given
		\begin{displaymath}
			\dot{x}=f(x),  x(0) = 1,
		\end{displaymath}
		where $f$ is a non-negative continuous
		increasing
		function,
		we have explosion in finite time if
		\begin{displaymath}
			\int_1^\infty\frac{dy}{f(y)}<\infty.
		\end{displaymath}
	\end{rmk}
	
	The proof of explosion
	(Theorem \ref{main_theorem} (ii))
	closely follows the scheme we have just outlined.
	In Section \ref{sec explosion proof},
	we first show that percolation
	with $\psi _x = \ell ^{(A)}_x$
	implies explosion, and then proceed
	to establish that percolation 
	occurs a.s.\ under 
	assumptions in Theorem \ref{main_theorem} (ii).
	In contrast,
	when considering non-explosion 
	we do not directly
	rely on percolation not occurring,
	because a possible long range dependence
	makes it difficult to deduce non-explosion
	from non-percolation.
	Instead, we
	show that the
	assumptions in Theorem \ref{main_theorem} (i)
	imply
	${\inf\limits _{x \in \N}\P \{ x \text{ is dry} \} >0}$,
	and the latter is then shown to be incompatible with explosion.

	\section{Proof of non-explosion}\label{sec non expl}

	In this section, we prove the first part of Theorem \ref{main_theorem}.
	\Viktor{Here at the beginning of the section we lay out the roadmap of the proof.
		We first show that the explosions
		in two directions, $+\infty$ 
		and $-\infty$, are equivalent a.s.
		This is formulated precisely in \eqref{tuechtig = efficient}.}
	Because of that, it suffices to rule out the possibility of the explosion
	in direction $+\infty$ to prove non-explosion of the system. To this end, we show next that \Viktor{under conditions \eqref{A_series_diverges}
		and \eqref{skeevy = unpleasant, creepy}
	} the particles
	left to the origin
	cannot contribute to an explosion in this direction
	(this is formulated precisely in \eqref{profundity = being profound})
	and can thus be removed.
	\Viktor{After that totally asymmetric discrete inhomogeneous Boolean percolation  enters the picture. It is introduced in Section \ref{sec tadibp} with $\ell _x ^{(A)}$ defined 
		in \eqref{secrete}.
		In Proposition \ref{sleuth = detective}
		the probability that a site is dry is shown to be 
		separated from zero:
		$$
		\inf\limits _{m \in \N} \P\{ m \ \mathrm{is\ dry} \}  > 0.
		$$
		The final stretch of the proof of Theorem \ref{main_theorem}, (i),
		starts on Page~\pageref{proof non explosion}. There the dry sites are shown to be `slow' in a certain sense, and that the inequality $
		{\inf\limits _{m \in \N} \P\{ m \ \mathrm{is\ dry} \}  > 0}
		$ is incompatible with explosion.}
	
	
	Recall that
	$a_i = \frac{i!}{\left(\AA(i)\right) ^i}$,
	and that
	in this section
	we work under the following 
	assumption on $A$ and $\mu$.
	\begin{con}\label{con non-expl}
		It holds that $\sum\limits^{\infty }_{z = 1 }\frac{1}{A(z)} = \infty$
		and 
		\begin{equation}\label{throw the baby out with the bathwater: lose sth u didnt want to}
			\sum\limits _{i=0} ^{\infty} \mu\left( \left[
			a_i,
			\infty \right)  \right) < \infty.
		\end{equation}
	\end{con}

	\begin{lem}\label{curmudgeon}
		The series $\sum\limits _{i=1} ^\infty \frac{1}{A(i)} \wedge \frac 1i$
		is divergent. 
	\end{lem}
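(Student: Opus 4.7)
My plan is to reduce the claim to an elementary dichotomy via Cauchy's condensation test. Set $a_i := 1/A(i)$; since $A$ is non-decreasing, $(a_i)$ is positive and non-increasing, and $\sum_i a_i = \infty$ by Condition~\ref{con non-expl}. Since both $(a_i)$ and $(1/i)$ are non-increasing, so is the sequence $(a_i \wedge 1/i)$, and Cauchy's condensation test therefore applies to both the hypothesis series and the target series.

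Writing $c_k := 2^k a_{2^k}$, the condensed versions of $\sum_i a_i$ and $\sum_i (a_i \wedge 1/i)$ are $\sum_k c_k$ and $\sum_k 2^k (a_{2^k} \wedge 2^{-k}) = \sum_k (c_k \wedge 1)$, respectively. The hypothesis transforms into $\sum_k c_k = \infty$, and the goal becomes $\sum_k (c_k \wedge 1) = \infty$.

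I will conclude by a short dichotomy on the sequence $(c_k)$. Either $c_k \geq 1$ for infinitely many $k$, in which case $c_k \wedge 1 = 1$ infinitely often and the series diverges trivially; or $c_k < 1$ for all $k$ past some $K$, in which case $c_k \wedge 1 = c_k$ for $k \geq K$, and the tail of $\sum_k (c_k \wedge 1)$ coincides with the divergent tail of $\sum_k c_k$. Either way, divergence holds, and condensation transports this back to the original series.

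There is no substantive obstacle; the only delicate point is verifying that $(a_i \wedge 1/i)$ is non-increasing, which is what legitimizes the use of Cauchy condensation on the target series. Everything else is bookkeeping.
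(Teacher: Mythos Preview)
Your proof is correct and follows essentially the same route as the paper: both arguments reduce the question via Cauchy condensation to the condensed series $\sum_k (c_k \wedge 1)$ with $c_k = 2^k/A(2^k)$, and then split into the two cases $c_k \geq 1$ infinitely often versus eventually $c_k < 1$. The only cosmetic difference is that the paper obtains the lower bound by an explicit dyadic grouping (one direction of the condensation test done by hand) rather than invoking condensation directly on the target series as you do.
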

	\begin{proof}
		By the Cauchy condensation test
		\begin{equation*}
			\sum\limits _{i=1}^\infty \frac{1}{A(i)} \simeq 
			\sum\limits _{n=1}^\infty \frac{2^n}{A(2^n)}.
		\end{equation*}
		We have 
		\begin{equation*}
			\sum\limits _{i=1}^\infty \frac{1}{A(i)} \wedge \frac 1i
			= \sum\limits _{n = 0} ^\infty 
			\sum\limits _{i=2^n} ^{2^{n+1}-1} \frac{1}{A(i)} \wedge \frac 1i
			\geq 
			\sum\limits _{n = 0} ^\infty 
			2^n \Big[ \frac{1}{A(2^{n+1})} \wedge \frac {1}{2^{n+1}}\Big]
			= \frac 12 \sum\limits _{n = 1} ^\infty 
			2^n \Big[\frac{1}{A(2^{n})} \wedge \frac {1}{2^{n}}\Big]:=S.
		\end{equation*}
		If the set $\mathcal{Q}:= \{n \in \N: \frac{1}{A(2^{n})} \geq \frac {1}{2^{n}} \}$
		is infinite, 
		then $S \geq  \frac{1}{2}\sum\limits _{n \in \mathcal{Q}} 1 =\infty $. 
		If $\mathcal{Q}$ is finite, 
		then 
		\[
		S \simeq \sum\limits _{n = 1} ^\infty 
		2^n \frac{1}{A(2^{n})} \simeq \sum\limits _{i=1}^\infty \frac{1}{A(i)} 
		=\infty.
		\]
	\end{proof}

	Without loss of generality, 
	we can replace $A(i)$
	with $A(i) \vee i$: 
	indeed,  the series $\sum\limits _{i=1} ^\infty \frac{1}{A(i) \vee i}$
	is divergent by Lemma \ref{curmudgeon},
	and \eqref{skeevy = unpleasant, creepy}
	holds too since $\AA$ decreases if
	we make $A$ greater.
	Thus, we  assume henceforth that $A(i) \geq i$,
	$i \in \N$.

	Define \label{page: explosion occurs in both directions}
	$$\sigma ^r _\infty = \inf\{t \geq 0: \sup \A _t = \infty \} = 
	\inf\{t \geq 0: \text{no sleeping particles left on  } [0, \infty) \}$$
	and 
	$$\sigma ^l _\infty = \inf\{t \geq 0: \inf \A _t = -\infty \} = 
	\inf\{t \geq 0: \text{no sleeping particles left on  } (-\infty,0] \}.$$
	In this paper we do not investigate the question
	under which conditions a.s.\
	$ \sigma ^r _\infty = \sigma ^l _\infty $.
	However, we note here that 
	both events 
	$\{ \sigma ^r _\infty < \infty\}$
	and $\{ \sigma ^l _\infty < \infty\}$
	are tail events with respect to the \Viktor{sequence of} $\sigma$-algebras
	$\{\mathscr{F}_n \}_{n\in\N}$ \Viktor{defined in \eqref{sich in die Sonne liegen}.}
	Hence 
	$$\P \{ \sigma ^r _\infty < \infty\} \in \{0,1\} \ \ 
	\text{and} \ \ \P \{ \sigma ^l _\infty < \infty\} \in \{0,1\}.$$
	By symmetry it follows that 
	$\P \{ \sigma ^r _\infty < \infty\} = 
	\P \{ \sigma ^l _\infty < \infty\}$
	and hence the events 
	$\{ \sigma ^r _\infty < \infty\}$
	and $\{ \sigma ^l _\infty < \infty\}$ coincide a.s.,
	that is,
	the equality
	\begin{equation}\label{tuechtig = efficient 12}
		\1 _{\{ \sigma ^r _\infty < \infty\}} = \1 _{\{ \sigma ^l _\infty < \infty\}}  
	\end{equation}
	holds a.s.
	\Viktor{Therefore to prove non-explosion it is enough to show that $\P \{ \sigma ^r _\infty < \infty\} = 0$, and}
	in the rest of the section we
	concentrate only on  $\sigma ^r _\infty $.
	\Viktor{As an aside not needed in this proof we point out that \eqref{tuechtig = efficient 12}
		justifies the discussion in the introduction on Page~\pageref{die Huelle = shell} about an equivalent definition of explosion in dimension one.}
	
	Note that $ \sigma ^r  _\infty < \infty$
	if and only if (a.s.) there exists 
	a sequence of pairs $\{(x_n, t_n)\}_{n \in \Z_+}$,
	where $x _n \in \Z$, $x_0 = 0$, $0 = t_0 < t_1 < t_2 < ...$,
	satisfying 
	\begin{itemize}
		\item the sleeping particles at $x_n$
		are activated at $t_n$
		by an active particle started from $x_{n-1}$, $n \in \N$,
		and
		\item
		$ \lim\limits _{n \to \infty}  t_n := t_{\infty} < \infty $.
	\end{itemize}
	
	\emph{A priori}  it may be that 
	infinitely many elements of
	$\{x_n\}_{n \in \Z_+}$
	are negative. 
	\Viktor{Over the next few pages we}
	show that under Condition \ref{con non-expl}
	this is  impossible (as formulated in Corollary \ref{skeeve out = being disgusted} and \eqref{profundity = being profound}).
	Recall that
	$a_i = \frac{i!}{\left(\AA(i)\right) ^i}$,
	$i \in \N$ and $a_0 = 0$,
	and set 
	and  $b_i = \mu((a_{i-1}, a_i])$, $b_1 = \mu([0, a_1])$. 
	\Viktor{The next two lemmas have an auxiliary character and are used later to bound certain series.}
	\begin{lem}\label{lemma peter wants a label here}
		There exists 
		$C_a> 1$ such that
		\begin{equation}\label{peter wants a label here}
			\sum\limits _{i=j}^\infty \frac{1}{a_i} \leq \frac{C_a}{a_j}, 
			\ \ \ j \in \N.
		\end{equation}
	\end{lem}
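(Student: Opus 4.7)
The plan is to apply Lemma \ref{reticent} directly to the sequence $\alpha_i = 1/a_i$. Once I verify that the ratios $\alpha_{i+1}/\alpha_i = a_i/a_{i+1}$ are eventually bounded above by some $r \in (0,1)$, the lemma immediately yields a constant $C_a$ such that $\sum_{i\geq m} 1/a_i \leq C_a / a_m$ for every $m \in \N$, which is exactly \eqref{peter wants a label here}.

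To check the ratio condition, I would compute directly from $a_i = i!/\AA(i)^i$ that
$$\frac{a_i}{a_{i+1}} = \frac{\AA(i+1)^{i+1}}{(i+1)\AA(i)^i} = \frac{\AA(i+1)}{i+1}\cdot\left(\frac{\AA(i+1)}{\AA(i)}\right)^i,$$
and then argue each factor behaves tamely. For the first factor I use the standing reduction $A(z)\geq z$ made earlier in the section (after Lemma \ref{curmudgeon}), which gives $\AA(i+1) \leq 1+\ln(i+1)$ and hence $\AA(i+1)/(i+1)\to 0$. For the second factor, I write $\AA(i+1) = \AA(i) + 1/A(i+1)$ and use $\log(1+x)\leq x$ together with $A(i+1)\geq i+1$ to bound
$$\left(\frac{\AA(i+1)}{\AA(i)}\right)^i \leq \exp\left(\frac{i}{A(i+1)\AA(i)}\right) \leq \exp\left(\frac{1}{\AA(i)}\right),$$
which tends to $1$ because $\AA(i)\to\infty$ by \eqref{A_series_diverges}. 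Multiplying, $a_i/a_{i+1}\to 0$, so \eqref{romp} is satisfied for, say, $r = 1/2$ and some $n$; Lemma \ref{reticent} finishes the argument.

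No step is really a genuine obstacle; the one point that warrants attention is that the reduction $A(z)\geq z$ is essential for the first factor to vanish (if $A$ were constant, $\AA(i+1)/(i+1)$ would stay bounded away from zero and the ratio $a_i/a_{i+1}$ would not tend to $0$). Fortunately this reduction has already been performed, so it is free to use here.
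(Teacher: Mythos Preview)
Your proof is correct and follows essentially the same route as the paper: both arguments use the reduction $A(i)\geq i$, decompose the ratio $a_i/a_{i+1}$ into the factor $\AA(i+1)/(i+1)$ (controlled via the harmonic-type bound $\AA(n)\leq O(\ln n)$) and the factor $(\AA(i+1)/\AA(i))^i$ (controlled by $(1+x)^i\leq e^{ix}$ together with $A(i+1)\geq i+1$), and then conclude via Lemma~\ref{reticent}. The only cosmetic difference is that the paper phrases the second bound through a fixed $\varepsilon\in(0,0.1)$ while you go directly through $\exp(1/\AA(i))\to 1$.
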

	\begin{proof}
		Recall that we have assumed $A(i)\geq i$
		for $i \in \N$, which we can do due to Lemma~\ref{curmudgeon}. Let $\varepsilon \in (0, 0.1)$.
		For large $n \in \N$
		\begin{equation*}
			\frac{A^{-1}(n+1)}{\sum\limits _{j=1}^n A^{-1}(j)} \leq \frac{\varepsilon}{ n}
		\end{equation*}
		and hence 
		\begin{equation}\label{twine = thick threads}
			\left[\frac{\AA (n+1)  }{\AA (n)}\right] ^n
			= \left[1 + \frac{A^{-1}(n+1)}{\sum\limits _{j=1}^n A^{-1}(j)} \right] ^n \leq \left[1 + \frac{\varepsilon}{ n} \right] ^n
			\leq e^\varepsilon.
		\end{equation}
		Consequently for large $n \in \N$
		\begin{equation*}
			\frac{a_{n+1}}{a_n} = \frac{(n+1)!}{\big(\AA (n+1)\big)^{n+1}}:
			\frac{n!}{\big(\AA (n)\big)^{n}}
			= \frac{n+1}{\AA (n+1)} \left[\frac{\AA (n)}{\AA (n+1)  }\right] ^n
			\geq \frac{n+1}{e^\varepsilon \AA (n+1)}.
		\end{equation*}
		It remains to note that
		$\AA (n) \leq \sum\limits _{j = 1} ^n \frac 1j \leq 2+\ln n$, $n \in \N$,
		and hence 
		$\frac{n+1}{\AA (n+1)} \xrightarrow{n \to \infty} \infty$. 
	\end{proof}
	
	\begin{lem} \label{paisley}
		Let $\{ \alpha _i \} _{i \in \N}$ be an increasing sequence of 
		natural numbers satisfying for some $c_{\alpha}>0$
		\begin{equation*}
			\sum\limits _{i=j}^\infty \frac{1}{\alpha_i} \leq \frac{c_{\alpha}}{\alpha_j}, 
			\ \ \ j \in \N,
		\end{equation*}
		and let $\beta _i =\mu((\alpha_{i-1}, \alpha_i])$, 
		$\beta _1 =\mu([0, \alpha_1])$. 
		Then
		\begin{equation}
			\sum\limits _{i=1} ^{\infty} \frac{1}{\alpha_i} 
			\sum\limits _{k: k \geq 0, k \leq  \alpha_i}  \mu (k)
			k \leq  c_{\alpha},
		\end{equation}
		and 
		\begin{equation} \label{naschkatze sein = have a sweet tooth}
			\sum\limits _{i=2} ^{\infty} \frac{1}{\alpha_i} 
			\sum\limits _{k: k \geq 0, k \leq  \alpha_i} \mu (k)
			k \leq  
			c_{\alpha} \frac{\alpha _1}{\alpha _2}+
			c_{\alpha} (1 - \beta _1).
		\end{equation}
	\end{lem}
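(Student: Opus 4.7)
The plan is a straightforward Fubini/Tonelli argument combined with the tail-sum hypothesis on $\{1/\alpha_i\}$. Both sums involve nonnegative terms, so I can freely swap the order of summation. After the swap, for each fixed $k\in\Z_+$ I need to evaluate $\sum_{i:\,\alpha_i\geq k} \frac{1}{\alpha_i}$; setting $i(k):=\min\{i\in\N:\alpha_i\geq k\}$, the hypothesis gives
\begin{equation*}
\sum_{i\geq i(k)} \frac{1}{\alpha_i} \leq \frac{c_\alpha}{\alpha_{i(k)}} \leq \frac{c_\alpha}{k}\qquad (k\geq 1),
\end{equation*}
where the last inequality uses $\alpha_{i(k)}\geq k$.

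For the first claim I would write
\begin{equation*}
\sum_{i=1}^{\infty}\frac{1}{\alpha_i}\sum_{k=0}^{\alpha_i}\mu(k)k = \sum_{k=0}^\infty \mu(k)k\sum_{i\geq i(k)}\frac{1}{\alpha_i},
\end{equation*}
note that the $k=0$ term vanishes, and apply the bound above to obtain $\sum_{k\geq 1}\mu(k)k\cdot c_\alpha/k \leq c_\alpha\sum_{k\geq 1}\mu(k) \leq c_\alpha$.

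For the second claim the only new feature is that the outer index starts at $i=2$, so I would split the inner range of $k$ at $\alpha_1$. If $k\leq\alpha_1$ then every $i\geq 2$ still satisfies $\alpha_i\geq k$, and the inner sum equals $\sum_{i\geq 2}1/\alpha_i\leq c_\alpha/\alpha_2$; if $k>\alpha_1$ then $i(k)\geq 2$, and the bound $c_\alpha/k$ from above applies verbatim. This gives
\begin{equation*}
\sum_{i=2}^{\infty}\frac{1}{\alpha_i}\sum_{k=0}^{\alpha_i}\mu(k)k \leq \frac{c_\alpha}{\alpha_2}\sum_{k=1}^{\alpha_1}\mu(k)k + c_\alpha\sum_{k>\alpha_1}\mu(k).
\end{equation*}
In the first term I bound $k\leq \alpha_1$ and use $\sum_{k=1}^{\alpha_1}\mu(k)\leq 1$, yielding $c_\alpha\alpha_1/\alpha_2$; in the second I recognise $\sum_{k>\alpha_1}\mu(k)=1-\mu([0,\alpha_1])=1-\beta_1$, producing the stated inequality \eqref{naschkatze sein = have a sweet tooth}.

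There is no serious obstacle; the only care needed is the bookkeeping for the shifted starting index in the second claim (in particular that $i(k)\geq 2$ when $k>\alpha_1$) and the observation that $k=0$ contributes nothing, which legitimises the bound $c_\alpha/k$ used only for $k\geq 1$.
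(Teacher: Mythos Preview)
Your proof is correct and follows essentially the same approach as the paper: interchange the order of summation and apply the tail-sum hypothesis $\sum_{i\ge j}1/\alpha_i\le c_\alpha/\alpha_j$. The only cosmetic difference is that the paper first partitions $\{0,\dots,\alpha_i\}$ into blocks $(\alpha_{j-1},\alpha_j]$ and then swaps $i$ and $j$, whereas you swap $i$ and $k$ directly via the index $i(k)$; since $i(k)=j$ precisely when $k\in(\alpha_{j-1},\alpha_j]$, the two arguments are the same up to notation.
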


	\begin{proof}
		Set $\alpha_0=0$. We have 
		\begin{multline*}
			\sum\limits _{i=1} ^{\infty} \frac{1}{\alpha_i} 
			\sum\limits _{k: k \geq 0, k \leq  \alpha_i}  \mu (k)k
			=
			\sum\limits _{i=1} ^{\infty}
			\frac { 1}   {\alpha_i} 
			\sum\limits _{j = 1} ^i
			\sum\limits_{ k= \alpha_{j-1}+1  } ^{\alpha_j}
			\mu(k)  k
			\\
			\leq 
			\sum\limits _{i=1} ^{\infty}
			\frac { 1}   {\alpha_i} 
			\sum\limits _{j = 1} ^i
			\beta_j \alpha_j
			\leq  \sum\limits _{j = 1} ^{\infty} \beta_j \alpha_j \sum\limits _{i=j} ^{\infty}\frac { 1}   {\alpha_i}
			\leq  \sum\limits _{j = 1} ^{\infty} \beta_j \alpha_j  \frac{c_{\alpha}}{\alpha_j}
			= c_{\alpha}\sum\limits _{j = 1} ^{\infty} \beta_j
			\leq c_{\alpha}.
		\end{multline*}
		Similarly
		\begin{multline*}
			\sum\limits _{i=2} ^{\infty} \frac{1}{\alpha_i} 
			\sum\limits _{k: k \geq 0, k \leq  \alpha_i}  \mu (k)k
			=
			\sum\limits _{i=2} ^{\infty}
			\frac { 1}   {\alpha_i} 
			\sum\limits _{j = 1} ^i
			\sum\limits_{ k= \alpha_{j-1}+1  } ^{\alpha_j}
			\mu(k)  k
			\\
			\leq 
			\sum\limits _{i=2} ^{\infty}
			\frac { 1}   {\alpha_i} 
			\sum\limits _{j = 1} ^i
			\beta_j \alpha_j
			\leq  \sum\limits _{j = 1} ^{\infty} \beta_j \alpha_j \sum\limits _{i=j\vee2} ^{\infty}\frac { 1}   {\alpha_i}
			\leq  \sum\limits _{j = 1} ^{\infty} \beta_j \alpha_j  \frac{c_{\alpha}}{\alpha_{j \vee 2}}
			= c_{\alpha} \beta _1 \frac{\alpha _1}{\alpha _2} + c_{\alpha}(1 - \beta _1),
		\end{multline*}
		which gives \eqref{naschkatze sein = have a sweet tooth}.
	\end{proof}
	
	Let $( N_t^{(x,j)}, t \geq 0  )$ be a Poisson process
	obtained from $( S_t^{(x,j)}, t \geq 0  )$ by making all jumps be $+1$:
	for $q> 0$ the number
	$N_q^{(x,j)}$ can be seen as the number of jumps of $( S_t^{(x,j)}, t \geq 0  )$
	before the time $q$. Clearly, a.s.\ $S_t^{(x,j)} \leq N_t^{(x,j)} $
	for all $x \in \Z$, $j \in \N$, $t \geq 0$. 
	Also,
	let 	 $(N_t, t \geq 0)$
	be the Poisson process 
	with the same jumps as $(S_t, t \geq 0)$.
	\Viktor{The next lemma is key in establishing that under Condition \ref{con non-expl} particles left to the origin cannot contribute to explosion.}
	
	\begin{lem}
		There exists an increasing sequence $\{d_q\} _{q \in \N}$
		satisfying
		\begin{equation*}
			\sum\limits _{q \in \N}  \P \big\{ \max \{N_q^{(x,j)} + x:
			x < 0, 1\leq j\leq \eta (x)\} \geq d_q \big\}
			< \infty.
		\end{equation*}
		Note that  the time $q$ takes discrete values here.
	\end{lem}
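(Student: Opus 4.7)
My plan is to prove the lemma in two steps. First, show that for each fixed $q \in \N$ the random variable
$$M_q := \max\bigl\{N_q^{(x,j)} + x : x < 0, \, 1 \leq j \leq \eta(x)\bigr\}$$
is almost surely finite. Second, given a.s.\ finiteness, iteratively choose $d_q \geq d_{q-1}+1$ large enough that $\P\{M_q \geq d_q\} \leq 2^{-q}$; this is possible because $\P\{M_q \geq d\} \to 0$ as $d \to \infty$, so summing over $q$ yields $\sum_q \P\{M_q \geq d_q\} \leq \sum_q 2^{-q} < \infty$, together with the required monotonicity of $\{d_q\}$.

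For the finiteness step, set $Z_{-n} := \max_{j \leq \eta(-n)} N_q^{(-n,j)}$, so that $M_q = \sup_{n\geq 1}(Z_{-n}-n)$. By the first Borel--Cantelli lemma it suffices to show $\sum_{n \geq 1}\P\{Z_{-n} \geq K+n\} < \infty$ for some $K = K(q)$. Since $\eta(-n)$ is independent of the walks $N^{(-n,j)}$, truncating $\eta$ at $\lfloor a_n \rfloor$ and using $1-(1-p)^k \leq \min(kp,1)$ yields
$$\P\{Z_{-n} \geq K+n\} \leq p_n \cdot a_n + \mu\bigl([a_n, \infty)\bigr),$$
where $p_n := \P\{N_q \geq K+n\}$. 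The tail series $\sum_n \mu([a_n,\infty))$ converges by Condition~\ref{con non-expl}, so it remains to bound $\sum_n p_n a_n$.

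Using the Chernoff estimate $\P\{N_q \geq m\} \leq e^{-q}(eq/m)^m$ together with Stirling's inequality $n! \leq C(n/e)^n\sqrt{n}$ applied to $a_n = n!/\AA(n)^n$, one obtains
$$p_n a_n \leq C\sqrt{n}\,e^{-q}\bigl(eq/(K+n)\bigr)^K \bigl(qn / ((K+n)\AA(n))\bigr)^n.$$
By \eqref{A_series_diverges}, $\AA(n) \to \infty$, so there exists $N_q$ with $\AA(N_q) \geq 2q$; for $n \geq N_q$ the inner factor is at most $2^{-n}$, making the tail contribution $\sum_{n \geq N_q} p_n a_n$ finite (and even arbitrarily small as $K$ grows). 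The remaining finitely many terms $n < N_q$ are handled by enlarging $K = K_q$ so that $\P\{N_q \geq K_q\}$ is smaller than $1/\max_{n < N_q} a_n$, which is possible since the Poisson tail vanishes. This establishes $M_q < \infty$ a.s.\ and the first step is complete. The main obstacle is the technical bookkeeping needed to balance the super-exponential growth of $a_n = n!/\AA(n)^n$ against the Poisson Chernoff decay, exploiting the divergence $\AA(n) \to \infty$; once this is in place, the Borel--Cantelli conclusion and the iterative construction of $d_q$ are routine.
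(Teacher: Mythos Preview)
Your proof is correct and in fact cleaner than the paper's. The paper constructs the sequence $\{d_q\}$ directly by imposing a long list of constraints (an auxiliary sequence $\kappa_q$, constants $c_{q,i}$ satisfying \eqref{the onus is on you} and \eqref{groove}, etc.) so that the union--bound estimate \eqref{lampoon} can be summed over $q$ explicitly. You instead separate the problem into two independent steps: first prove $M_q<\infty$ a.s.\ for every fixed $q$, then choose $d_q$ after the fact so that $\P\{M_q\ge d_q\}\le 2^{-q}$. This is a genuine simplification: once a.s.\ finiteness is established, the existence of a summable increasing $\{d_q\}$ is automatic and requires no bookkeeping across different values of $q$.

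The core estimates are the same in both arguments---a Poisson/Chernoff tail bound on $\P\{N_q\ge m\}$, Stirling for $a_n=n!/\AA(n)^n$, the tail hypothesis $\sum_n\mu([a_n,\infty))<\infty$, and the divergence $\AA(n)\to\infty$ to beat the factor $q^n$---but you truncate $\eta(-n)$ at $a_n$ rather than at the paper's $q$-dependent level $c_{q,i}^{-1}$, which makes the splitting more transparent. Your remark that the finitely many terms $n<N_q$ can be made small by enlarging $K$ is unnecessary (a finite sum of finite terms is finite), but harmless. The trade-off is that the paper's approach yields somewhat more explicit quantitative control over the tail probabilities, while yours gets to the conclusion with less machinery.
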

	\begin{proof}
		By Lemma \ref{reticent}
		for $t \geq 0$
		there exists $C_t >0$
		such that for all
		$m, i \in \N$ 
		
		\begin{equation} \label{awol}
			\P \{N_t \geq m + i\} 
			\leq C_t e^{-t}\frac{t^{m + i}}{(m + i) !}
			< C _t e^{-t} \frac{t^{m + i} e ^{m + i}}
			{(m + i) ^{m + i}}
			= C_t e^{-t} \left( \frac{te}{m + i} \right)^{m + i}.
		\end{equation}
		By Condition \ref{con non-expl}
		for $n \in \N$
		there exists $ \kappa _n \in \N$  such that 
		\begin{equation}\label{raffle}
			\sum\limits _{i=1}  ^\infty \mu[ a_{i+\kappa _n} , \infty)
			=
			\sum\limits _{i=\kappa _n+1}  ^\infty \mu[ a_i , \infty) < \frac{1}{2^n}.
		\end{equation}
		For 
		$q,i \in \N$
		set $c_{q,i} : = C_q e^{-q} \left( \frac{qe}{d_q + i} \right)
		^{d_q + i}$, where $C_q$ is the constant in \eqref{awol}, and
		choose the sequence $d_1, d_2, ...$,
		$d_q \geq 2^q$,
		in such a way that $c_{q+1,i} < \frac {1}{2q} c_{q,i} $,
		$c_{1,i} \leq \frac {1}{a_i}$,
		\begin{equation}\label{the onus is on you}
			a_{i+\kappa _q} < c_{q,i}^ {-1}, 
			\ \ \ i, q \in \N,
		\end{equation}
		\begin{equation}\label{groove}
			c_{q,1}  \sum\limits _{k: k \geq0, k < c_{q,1}^{-1}} \mu (k)  k \leq \frac{1}{2^q}, \ \ \text{ and } \ \ 
			\mu([ c_{q,1}^{-1}, \infty)) \leq \frac{1}{2^q}.
		\end{equation}
		The sequence $d_1, d_2, ...$ can be constructed successively: given $d_1,...,d_n$, 
		$d_{n+1}$ can be chosen large enough 
		to satisfy all the conditions.
		It is important
		for \eqref{the onus is on you}
		that 
		by Condition \ref{con non-expl}
		the asymptotic
		growth rate of $j \mapsto a_j$ is actually lower
		than that of $j \mapsto \left(\frac{d + j}{c}\right)^{j}$
		for constants $c,d> 0$: that is,
		for every $c,d >0 $ for large $j$
		\[
		a_j < \frac{j ^j}{\left(\AA(j)\right) ^j} < \left(\frac{d + j}{c}\right)^{j}.
		\]
		For \eqref{groove} it is important that 
		\[
		\lim\limits _{Q \to \infty}
		\frac 1Q \sum\limits _{k: k \geq0, k < Q}\mu (k)  k 
		\leq 
		\lim\limits _{Q \to \infty}
		\sum\limits _{k: k \geq0}\mu (k) \left[\frac k Q 
		\wedge 1\right]
		= 0.
		\]
		We have for $q \in \N$
		by \eqref{awol}
		\begin{align}
			\P \big\{ \max \{N_q^{(i,j)} + i:
			i < 0, 1\leq j\leq \eta (i)\} \geq d_q \big\}
			\leq & \sum\limits _{i=1}  ^\infty
			\sum\limits _{k = 0} ^\infty \mu (k)
			\Big[k
			\P \{N_q \geq d_q + i\} \wedge 1 \Big] \notag
			\\
			\leq  &
			\sum\limits _{i=1}  ^\infty
			\sum\limits _{k = 0} ^\infty \mu (k)
			\Big[k
			c_{q,i} \wedge 1 \Big]  \notag
			\\
			= &
			\sum\limits _{i=1}  ^\infty \mu[ c_{q,i} ^{-1} , \infty)
			+ 
			\sum\limits _{i=1}  ^\infty c_{q,i} 
			\sum\limits _{k: k \geq0, k < c_{q,i}^{-1}}  \mu (k)k.
			\label{lampoon}
		\end{align}
		Taking the sum over $q$ in \eqref{lampoon} we get 
		\begin{multline} \label{a day late and a dollar short}
			\sum\limits _{q \in \N}
			\P \big\{ \max \{N_q^{(i,j)} + i:
			i < 0, 1\leq j\leq \eta (x)\} \geq d_q \big\}
			\\
			\leq 
			\sum\limits _{q =1} ^\infty
			\sum\limits _{i=1}  ^\infty \mu[ c_{q,i} ^{-1} , \infty)
			+
			\sum\limits  _{q =1} ^\infty
			\sum\limits _{i=1}  ^\infty
			c_{q,i} 
			\sum\limits _{k: k \geq0, k < c_{q,i} ^{-1}} \mu (k) k.
		\end{multline}
		
		Our conditions on $d_q$ and $c_{q,i}$ now imply
		that both sums on the right hand side of \eqref{a day late and a dollar short} are finite. 
		The first  is finite 
		since
		by \eqref{raffle} and \eqref{the onus is on you}
		$$\sum\limits _{q =1} ^\infty
		\sum\limits _{i=1}  ^\infty \mu[ c_{q,i} ^{-1} , \infty)
		\leq
		\sum\limits _{q =1} ^\infty
		\sum\limits _{i=1}  ^\infty \mu[ a_{\kappa _q + i} , \infty)
		\leq \sum\limits _{q =1} ^\infty \frac{1}{2^q}.
		$$
		
		To show that the second sum on the right hand side in \eqref{a day late and a dollar short}
		is finite, we split the sum into two and apply 
		\eqref{groove}
		and Lemma~\ref{paisley} to the internal sums
		with $\alpha_i = \lfloor c_{q,i} ^{-1} \rfloor$. 
		In notation of Lemma~\ref{paisley}
		we can take  
		$c_\alpha = 2$ for every $q \in \N$, and 
		use that $\frac{\alpha _1}{\alpha _2} \leq d_q^{-1}\leq 2^{-q} $ and $\beta _1 \geq 1 - 2^{-q}$ by the second inequality in 
		\eqref{groove}. We have
		
		\begin{multline*}
			\sum\limits  _{q =1} ^\infty
			\sum\limits _{i=1}  ^\infty
			c_{q,i} 
			\sum\limits _{k: k \geq0, k < c_{q,i} ^{-1}} \mu (k) k
			= \sum\limits  _{q =1} ^\infty  
			c_{q,1} 
			\sum\limits _{k: k \geq0, k < c_{q,1} ^{-1}} \mu (k) k
			+
			\sum\limits  _{q =1} ^\infty
			\sum\limits _{i=2}  ^\infty
			c_{q,i} 
			\sum\limits _{k: k \geq0, k < c_{q,i} ^{-1}} \mu (k) k
			\\
			\leq \sum\limits  _{q =1} ^\infty  
			\frac{1}{2^q}
			+
			\sum\limits  _{q =1} ^\infty
			\sum\limits _{i=2}  ^\infty
			\frac{1}{\lfloor c_{q,i} ^{-1} \rfloor } 
			\sum\limits _{k: k \geq0, k \leq \lfloor c_{q,i} ^{-1} \rfloor 
			} \mu (k) k 
			\leq \sum\limits  _{q =1} ^\infty  
			\frac{1}{2^q} +  \sum\limits  _{q =1} ^\infty
			\Big(2\frac{1}{2^q} + 2 \frac{1}{2^q} \Big) < \infty.
		\end{multline*}
	\end{proof}
	
	By the above lemma a.s.\ only finitely many events
	$\{  \max \{N_t^{(x,j)}:
	x < 0, 1\leq j\leq \eta (x)\} \geq d_t \}$, 
	$t \in \N$, occur. In particular, we have
	\begin{cor}\label{skeeve out = being disgusted}
		A.s.\ for all $t > 0$
		\begin{equation} \label{aufstreben}
			\sup\limits _{x < 0, 1 \leq j \leq \eta (x)} \big( S_t^{(x,j)}
			+ x \big)
			< \infty.
		\end{equation}
	\end{cor}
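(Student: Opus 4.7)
The plan is to deduce the corollary from the preceding lemma by a direct Borel--Cantelli argument, followed by a monotonicity step to pass from integer times to arbitrary $t > 0$. Writing $E_q := \{ \max \{N_q^{(x,j)} + x : x < 0, 1 \leq j \leq \eta(x)\} \geq d_q\}$ for $q \in \N$, the preceding lemma gives $\sum_{q \in \N} \P(E_q) < \infty$. Borel--Cantelli then yields $\P(\limsup_q E_q) = 0$, so there is an a.s.\ finite random integer $Q = Q(\omega)$ such that $E_q$ fails for every $q \geq Q$, i.e.\ for every such $q$,
\begin{equation*}
    \sup_{x < 0,\, 1 \leq j \leq \eta(x)} \bigl(N_q^{(x,j)} + x\bigr) < d_q.
\end{equation*}

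To upgrade this from integer $q$ to arbitrary real $t > 0$, I would use two monotone comparisons that hold for all $x \in \Z$ and $j \in \N$: first, a.s.\ $S_t^{(x,j)} \leq N_t^{(x,j)}$ for every $t \geq 0$, since $N^{(x,j)}$ is obtained from $S^{(x,j)}$ by replacing all $-1$ jumps by $+1$; second, $t \mapsto N_t^{(x,j)}$ is non-decreasing. Fix an arbitrary $t > 0$ and pick any integer $q \geq t \vee Q$. Then
\begin{equation*}
    \sup_{x < 0,\, 1 \leq j \leq \eta(x)} \bigl(S_t^{(x,j)} + x\bigr)
    \leq \sup_{x < 0,\, 1 \leq j \leq \eta(x)} \bigl(N_q^{(x,j)} + x\bigr)
    < d_q < \infty,
\end{equation*}
which is exactly \eqref{aufstreben}. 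Since the exceptional null set on which $Q$ is infinite does not depend on $t$, the bound holds a.s.\ simultaneously for all $t > 0$.

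There is no real obstacle here: the corollary is essentially a reformulation of the preceding lemma, and the only non-cosmetic point is the observation that the event of interest is monotone in $t$ (through the domination $S_t^{(x,j)} \leq N_t^{(x,j)}$ and the monotonicity of the Poisson counting process), so controlling it along the integer sequence $t = q \in \N$ suffices to control it for every real $t > 0$.
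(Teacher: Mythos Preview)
Your proof is correct and follows exactly the approach the paper intends: the sentence immediately preceding the corollary already invokes Borel--Cantelli on the events $E_q$, and the passage from integer $q$ to arbitrary $t>0$ via $S_t^{(x,j)}\leq N_t^{(x,j)}\leq N_q^{(x,j)}$ is the natural (and only) missing detail. You have simply made explicit what the paper leaves to the reader.
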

	This means that a.s.\
	the particles at $-1,-2,...$
	do not contribute to
	the explosion toward $+\infty$.
	More precisely, 
	let us modify our process by removing 
	all the sleeping particles left to the origin
	at the beginning and then proceeding as usual. 
	For this modified process let $\theta_n$, $n \in \N$,
	be the moment when \Viktor{site} $n$ is visited by an active particle for the first time, and let $\theta _\infty = \lim\limits_{n \to \infty} \theta _n$.
	Then clearly a.s.\ $\sigma ^r _\infty \leq \theta _\infty$, 
	however in view of Corollary \ref{skeeve out = being disgusted},
	a.s. 
	\begin{equation}\label{profundity = being profound}
		\1_{ \{\sigma ^r _\infty = \infty \}}
		= \1_{ \{\theta  _\infty = \infty \} }.
	\end{equation}
	
	\Viktor{The equality \eqref{profundity = being profound} represents a major stepping stone in the proof of Theorem~\ref{main_theorem}, (i). It allows us to remove at time $t = 0$ all sleeping particles left to the origin. From here on out we only consider the modified process with particles left to the origin removed; equivalently, we set $\eta (y) = 0$ for $y < 0$.}

	Recall that $\ell ^{(A)} _i$ was defined in \eqref{secrete}.
	\Viktor{In the remaining part of the proof of Theorem~\ref{main_theorem}, (i),  we consider totally asymmetric discrete inhomogeneous Boolean percolation from Section~\ref{sec tadibp} on $\Z _+$ with $\psi _i = \ell ^{(A)} _i$, $i \in \Z _+$.}
	The probability that $m \in \N$ is dry is given by
	\begin{equation}\label{scintillating}
		\P\{ m \text{ is dry} \} = \P\{ Y_m = 0 \}
		=\prod\limits _{i=0} ^{m-1} \P \{ \ell ^{(A)} _i \leq m-i \}
		= \prod\limits _{i=0} ^{m-1} \left( 1 -  \P \{ \ell ^{(A)} _i > m-i \}
		\right).
	\end{equation}
	\Viktor{In the next few lemmas we work towards establishing that 
		$
		\inf\limits _{m \in \N} \P\{ m \ \mathrm{is\ dry} \}  > 0
		$; this is achieved in Proposition \ref{sleuth = detective}.}
	\Viktor{The next two lemmas, Lemma \ref{waif = forsaken or orphaned child} and Lemma \ref{crack me up}, are auxiliary tools 
		in the proof of Proposition~\ref{sleuth = detective}.}
	\begin{lem}\label{waif = forsaken or orphaned child}
		There exists $C>0$ such that for $i \in \Z_+$, $j \in \N$
		\begin{equation*}
			\P\left\{ \exists t > 0:
			t \leq \sum\limits^{i + S _{t} }_{z = i+1 } \frac{1}{A(z)} 
			{\ \rm{ and } \ } S _{t}   \geq j 
			\right\} \leq 
			C
			\exp\left\{-\AA(i, i+j) \right\}
			\frac{ \left( \AA(i, i+j) \right) ^j}{j!}.
		\end{equation*}
	\end{lem}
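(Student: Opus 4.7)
My plan is to dominate the walk $S$ by the underlying Poisson process of jumps and then invoke Lemma~\ref{reticent} twice --- first to bound a Poisson tail by its leading term, and then to sum these leading terms. Throughout, set $T_k := \AA(i, i+k)$ and let $\tau_k := \inf\{t \geq 0 : S_t = k\}$ be the first hitting time of level $k$ by $S$. The event in question is easily seen to equal $\bigcup_{k \geq j} \{\tau_k \leq T_k\}$: if $t$ satisfies $S_t \geq j$ and $t \leq \AA(i, i+S_t)$, then $k := S_t$ gives $\tau_k \leq t \leq T_k$, while conversely $\tau_k \leq T_k$ witnesses the event through $t := \tau_k$. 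Letting $(N_t)_{t \geq 0}$ denote the rate-$1$ Poisson process of jumps of $(S_t)_{t \geq 0}$, we have $S_t \leq N_t$ and therefore $\tau_k \geq \tau^N_k$, the $k$-th arrival time of $N$; a union bound then yields
\begin{displaymath}
\P\Bigl\{\exists t > 0:\, t \leq \AA(i, i+S_t),\; S_t \geq j\Bigr\} \leq \sum_{k = j}^{\infty} \P\{N_{T_k} \geq k\}.
\end{displaymath}

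Recall we have reduced to the case $A(z) \geq z$, so $T_k \leq 1 + \ln(k+1)$ uniformly in $i$, and for some universal $k_0$ one has $T_k \leq (k+1)/2$ for all $k \geq k_0$. Applying Lemma~\ref{reticent} to the Poisson probability mass function $m \mapsto e^{-T_k} T_k^m/m!$ (whose successive ratios $T_k/(m+1)$ are $\leq 1/2$ for $m \geq k \geq k_0$) gives $\P\{N_{T_k} \geq k\} \leq C_1 \alpha_k$ for $k \geq k_0$, with $\alpha_k := e^{-T_k} T_k^k/k!$ and $C_1$ universal. It thus suffices to prove $\sum_{k \geq j} \alpha_k \leq C_2 \alpha_j$ uniformly in $i, j$; the finitely many values $j \leq k < k_0$ in the Poisson tail bound, and the small $j$ below a universal threshold $k_1$ introduced next, are absorbed into the final constant $C$ since $T_k$ (hence $e^{T_k}$) is bounded on each finite range.

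For the ratio $\alpha_{k+1}/\alpha_k$, the key observation is that $f_l := 1/A(i+l)$ is non-increasing in $l$, whence $T_k = \sum_{l=1}^k f_l \geq k f_k \geq k f_{k+1}$, and so
\begin{displaymath}
\left(\frac{T_{k+1}}{T_k}\right)^{\! k} = \left(1 + \frac{f_{k+1}}{T_k}\right)^{\! k} \leq \exp\!\Bigl( \frac{k f_{k+1}}{T_k}\Bigr) \leq e.
\end{displaymath}
This yields
\begin{displaymath}
\frac{\alpha_{k+1}}{\alpha_k} = e^{-f_{k+1}} \cdot \frac{T_{k+1}}{k+1} \cdot \left(\frac{T_{k+1}}{T_k}\right)^{\! k} \leq \frac{e\, T_{k+1}}{k+1},
\end{displaymath}
which is $\leq 1/2$ for all $k$ beyond some universal $k_1$. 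A second application of Lemma~\ref{reticent} then gives $\sum_{k \geq j} \alpha_k \leq C_2 \alpha_j$ for $j \geq k_1$, and for the finitely many $j < k_1$ the same ratio estimate shows that the individual ratios $\alpha_k/\alpha_j$ (with $j \leq k \leq k_1$) are bounded by a universal constant, closing the argument. The main subtlety is the ratio estimate $(T_{k+1}/T_k)^k \leq e$, which hinges entirely on the elementary lower bound $T_k \geq k f_k$ coming from monotonicity of $f$; once this is in hand, the two invocations of Lemma~\ref{reticent} are routine.
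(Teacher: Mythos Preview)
Your argument is correct and follows essentially the same route as the paper: dominate $S$ by the jump-counting Poisson process $N$, reduce to the union bound $\sum_{k\ge j}\P\{N_{T_k}\ge k\}$, and control the sum via the ratio estimate $(T_{k+1}/T_k)^k\le e$ together with Lemma~\ref{reticent}. The only organizational difference is that you invoke Lemma~\ref{reticent} twice (first to replace each Poisson tail by $C_1\alpha_k$, then to sum the $\alpha_k$), whereas the paper expands each Poisson tail, swaps the order of summation, and applies Lemma~\ref{reticent} once to the resulting sequence $s_k=(k-j+1)T_k^k/k!$; the key ratio bound is identical in both.
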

	\begin{proof}
		Recall that $(S_t, t \geq 0)$
		is a simple continuous-time random walk
		on $\Z$,  $\tau _k$
		is the $k$-th jump of $(S_t, t \geq 0)$,
		$\tau _0 = 0$,
		and  $(N_t, t \geq 0)$
		is the Poisson process 
		with jumps at $\tau _1, \tau _2, ...$.
		Note that 
		\begin{multline*}
			\P\left\{ \exists t > 0:
			t \leq \sum\limits^{i + S _{t} }_{z = i+1 } \frac{1}{A(z)} 
			{\ \rm{ and } \ } S _{t}   \geq j 
			\right\} 
			=	\P\left\{ \exists t > 0:
			t \leq \AA ({i + S _{t} }) - \AA (i) 
			{\ \rm{ and } \ } S _{t}   \geq j 
			\right\} 
			\\*
			\leq
			\P\left\{ \exists t > 0:
			t \leq \AA ({i + N _{t} }) - \AA (i)
			{\ \rm{ and } \ } N _{t}   \geq j 
			\right\}.
		\end{multline*}
		Now
		\begin{align}
			\notag
			\P\{ \exists t > 0:
			t & \leq \AA ({i + N _{t} }) - \AA (i)
			{\ \rm{ and } \ } N _{t}   \geq j 
			\} 
			= 
			\P\left\{ \exists n \in \N, n\geq j:
			\tau_n \leq \AA (i, {i + n }) 
			\right\}
			\\ \notag
			= &
			\P\left\{ \exists n \in \N, n\geq j:
			N _{\AA (i, {i + n }) } \geq n 
			\right\} \leq
			\sum\limits _{n = j } ^\infty 
			\P\left\{ 
			N _{\AA (i, {i + n }) } \geq n 
			\right\}
			\\ \notag
			= & \sum\limits _{n = j } ^\infty 
			\sum\limits _{k = n} ^\infty 
			e^{- \AA (i, {i + n }) } 
			\frac{ \big(\AA  (i, {i + n})\big) ^k }{k!}
			= \sum\limits _{k = j } ^\infty 
			\frac{1}{k!}  \sum\limits _{n = j } ^k e^{- \AA (i, {i + n }) } 
			\big(\AA  (i, {i + n})\big) ^k
			\\ \notag
			\leq &
			e^{- \AA (i, {i + j }) } 
			\sum\limits _{k = j } ^\infty 
			\frac{1}{k!}
			\sum\limits _{n = j } ^k
			\big(\AA  (i, {i + k})\big) ^k
			\\ \label{warunki = conditions}
			= &  e^{- \AA (i, {i + j }) } 
			\sum\limits _{k = j } ^\infty 
			\underbrace{
				\frac{(k-j+1)\big(\AA  (i, {i + k})\big) ^{k} }{k!}}_{s_k}.
		\end{align}
		Note that $\AA  (i, {i + k}) \leq \AA  (k) \leq \ln k +2$
		and similarly to \eqref{twine = thick threads}
		\[
		\frac{\big(\AA  (i, {i + k+1})\big) ^{k+1 } }
		{\big(\AA  (i, {i + k})\big) ^{k }}
		\leq \left[ 1 + \frac 1k \right] ^k \leq e.
		\]
		Therefore the sequence $\{s_k \}_{k \in \N}$ 
		satisfies the conditions of 
		Lemma \ref{reticent} 
		uniformly in $i$ and $j$: 
		\[
		\frac{s_k}{s_{k+1}} = \frac{k+1}{\AA  (i, {i + k+1})}
		\frac{k-j+1}{k-j+2} \left[ \frac{\big(\AA  (i, {i + k+1})\big)  }
		{\big(\AA  (i, {i + k})\big) } \right]^k \geq
		\frac{k+1}{2e(\ln k +2)}.
		\]
		We see that the  the convergence
		$\frac{s_k}{s_{k+1}} \xrightarrow{k \to \infty} \infty$ takes place uniformly in $i$ and $j$.
		Consequently there exists $C>0$ such that 
		for all $i,j \in \N$ 
		\begin{equation*}
			\sum\limits _{k = j} ^ \infty s_k \leq C s_j.
		\end{equation*}
		Therefore by \eqref{warunki = conditions}
		\begin{multline*}
			\P\left\{ \exists t > 0:
			t \leq \AA ({i + N _{t} }) - \AA (i)
			{\ \rm{ and } \ } N _{t}   \geq j 
			\right\} \leq 
			C e^{- \AA (i, {i + j }) } 
			s_j
			=
			C e^{- \AA (i, {i + j }) } 
			\frac{\big(\AA  (i, {i + j})\big) ^{j} }{j!}.
		\end{multline*}
	\end{proof}
	
	\begin{lem}\label{crack me up}
		Let $0 < \alpha _1 \leq \alpha _2 \leq ... $
		be a sequence of positive numbers such that
		$\alpha _n \xrightarrow{n \to \infty} \infty$,
		$\limsup \limits _{n \to \infty} \frac{\alpha _{n+1}}{\alpha _n} > 1$, 
		and for some $C_\alpha > 1$
		\begin{equation*}
			\sum\limits _{i=m} ^{\infty}  \frac{1}{\alpha_i}
			\leq \frac{C_\alpha}{\alpha _m}.
		\end{equation*}
		Then for $C \geq 1$
		\begin{equation*}
			\sum\limits _{i=1} ^{\infty} \sum\limits_{k=0}^\infty \mu(k) \left(1 \wedge k C
			\alpha_i^{-1} \right)  
			\simeq 
			\sum\limits _{i=1} ^{\infty} \sum\limits_{k=0}^\infty \mu(k) \left(1 \wedge k
			\alpha_i^{-1} \right).
		\end{equation*}
	\end{lem}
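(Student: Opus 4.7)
The plan is to prove a quantitatively stronger statement, namely the two-sided multiplicative bound
\[
\sum_{i=1}^\infty\sum_{k=0}^\infty\mu(k)\bigl(1\wedge k\alpha_i^{-1}\bigr)
\ \leq\
\sum_{i=1}^\infty\sum_{k=0}^\infty\mu(k)\bigl(1\wedge kC\alpha_i^{-1}\bigr)
\ \leq\ C\sum_{i=1}^\infty\sum_{k=0}^\infty\mu(k)\bigl(1\wedge k\alpha_i^{-1}\bigr).
\]
From this chain the two double sums differ only by multiplicative constants, so they converge together, which is exactly the $\simeq$ claim.

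The first inequality is immediate from $C\geq 1$: since $kC/\alpha_i\geq k/\alpha_i$, one has $1\wedge kC/\alpha_i\geq 1\wedge k/\alpha_i$ term by term, and summation preserves the inequality.

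For the second inequality the key step is the pointwise comparison
\[
1\wedge Ca \ \leq\ C\bigl(1\wedge a\bigr), \qquad a\geq 0,\ C\geq 1,
\]
which I would verify by a short case split: if $a\leq 1/C$ both sides equal $Ca$; if $1/C<a\leq 1$ the left is $1$ and the right is $Ca\geq 1$; if $a>1$ the left is $1$ and the right is $C\geq 1$. Taking $a=k/\alpha_i$, multiplying by $\mu(k)\geq 0$, and summing over $k$ and $i$ (Tonelli applies since every summand is non-negative) yields the desired upper bound.

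The step that needs most care is the pointwise comparison above, but it is genuinely elementary. It is worth remarking that this route does not rely on the tail hypothesis $\sum_{i\geq m}\alpha_i^{-1}\leq C_\alpha/\alpha_m$ nor on the condition $\limsup_n\alpha_{n+1}/\alpha_n>1$; these assumptions are natural in the context where the lemma is invoked (see Proposition~\ref{sleuth = detective}) and ensure finiteness of the sums of interest there, but they are not needed for the $\simeq$ comparison itself.
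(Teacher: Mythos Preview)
Your proof is correct and takes a genuinely simpler route than the paper's. The paper establishes the $\precsim$ direction by splitting the left-hand sum at $k=C\alpha_i$ into a tail term $\sum_i \mu([C\alpha_i,\infty))$ and a truncated-mean term $C\sum_i \alpha_i^{-1}\sum_{k<C\alpha_i}\mu(k)k$; it then dominates the tail term by $\sum_i\mu([\alpha_i,\infty))$ and uses the hypothesis $\sum_{i\geq m}\alpha_i^{-1}\leq C_\alpha/\alpha_m$ (via a Fubini-type rearrangement with $\beta_i=\mu((C\alpha_{i-1},C\alpha_i])$) to show the second piece is uniformly bounded by $CC_\alpha$. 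You instead observe the elementary pointwise inequality $1\wedge Ca\leq C(1\wedge a)$ and sum it, which gives the multiplicative bound directly.

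What each approach buys: your argument is shorter, yields the sharper two-sided estimate with explicit constant $C$, and, as you correctly note, makes no use of the monotonicity, growth, $\limsup$, or tail-sum hypotheses on $(\alpha_i)$. The paper's argument is more circuitous but has the incidental by-product that the truncated-mean term is always finite under the tail hypothesis; this fact is reused in the proof of Proposition~\ref{sleuth = detective} when bounding $S_2$, so the decomposition is not wasted effort in the paper's overall economy. Still, for the lemma as stated your argument is strictly cleaner.
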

	\begin{proof}
		We have 
		\begin{align}
			\sum\limits _{i=1} ^{\infty} \sum\limits_{k=0}^\infty \mu(k) \left(1 \wedge k C
			\alpha_i^{-1} \right) & \leq
			\sum\limits _{i=1} ^{\infty} \sum\limits_{k: k \geq
				C \alpha_i }  \mu(k) + 
			\sum\limits _{i=1} ^{\infty} \sum\limits_{k: k <
				C \alpha_i  } \mu(k)k C
			\alpha_i^{-1} \notag
			\\  &
			=
			\sum\limits _{i=1} ^{\infty}
			\mu \left( \left[C \alpha_i, \infty \right) \right)
			+ C 
			\sum\limits _{i=1} ^{\infty} \sum\limits_{k: k <
				C \alpha_i  } \mu(k)k \alpha_i^{-1}.
			\label{hergeben}
		\end{align}
		Since $C\geq 1$
		\begin{equation}\label{annehmen}
			\sum\limits _{i=1} ^{\infty}
			\mu \left( \left[C \alpha_i, \infty \right) \right)
			\precsim 
			\sum\limits _{i=1} ^{\infty}
			\mu \left( \left[ \alpha_i, \infty \right) \right).
		\end{equation}
		The second sum in \eqref{hergeben} is always finite.
		Indeed, set $\alpha_0=0$ and $ \beta _i = \mu ((C\alpha _{i-1}, C\alpha _i])$,
		then 
		\begin{multline*}
			\sum\limits _{i=1} ^{\infty} \sum\limits_{k: k <
				C \alpha_i  } \mu(k)k \alpha_i^{-1}
			=
			\sum\limits _{i=1} ^{\infty} 
			\sum\limits_{j = 1} ^{i}
			\sum\limits_{k: C \alpha _{j-1}  \leq k <
				C \alpha_{j}  } \mu(k)k \alpha_i^{-1}
			\leq \sum\limits _{i=1} ^{\infty} 
			\sum\limits_{j = 1} ^{i} \beta_j	C \alpha_{j}
			\alpha_i^{-1} 
			\\
			= C 
			\sum\limits _{j=1} ^{\infty} 
			\beta_j \alpha_{j}
			\sum\limits_{i = j} ^{\infty}
			\alpha_i^{-1} 
			\leq C C_\alpha \sum\limits _{j=1} ^{\infty} 
			\beta_j \alpha_{j} \alpha_j^{-1} \leq C C_\alpha.
		\end{multline*}
		Thus \eqref{annehmen} yields
		\begin{equation*}
			\sum\limits _{i=1} ^{\infty} \sum\limits_{k=0}^\infty \mu(k) \left(1 \wedge k C
			\alpha_i^{-1} \right)  
			\precsim
			\sum\limits _{i=1} ^{\infty} \sum\limits_{k=0}^\infty \mu(k) \left(1 \wedge k
			\alpha_i^{-1} \right).
		\end{equation*}
		Since 
		\begin{equation*}
			\sum\limits _{i=1} ^{\infty} \sum\limits_{k=0}^\infty \mu(k) \left(1 \wedge k C
			\alpha_i^{-1} \right)  
			\geq
			\sum\limits _{i=1} ^{\infty} \sum\limits_{k=0}^\infty \mu(k) \left(1 \wedge k
			\alpha_i^{-1} \right),
		\end{equation*}
		the statement of the lemma follows.
	\end{proof}
	
	\begin{prop} \label{sleuth = detective}
		We have
		\begin{equation}\label{cognate}
			\sup\limits _{m \in \N} 
			\sum\limits _{i=0} ^{m-1}  \P \{ \ell ^{(A)} _i > m-i  \}
			< \infty
		\end{equation}
		and 
		\begin{equation}\label{menagerie}
			\inf\limits _{m \in \N} \P\{ m \ \mathrm{is\ dry} \} 
			> 0.
		\end{equation}
	\end{prop}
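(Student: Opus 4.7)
The plan is to first prove the summability assertion~\eqref{cognate}, which is the main analytic work, and then deduce the lower bound~\eqref{menagerie} from~\eqref{cognate} and the product formula~\eqref{scintillating} together with a uniform bound of the form $\sup_{i\in\N}\P\{\ell^{(A)}_i\geq 2\}<1$.

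For~\eqref{cognate}, fix $m\in\N$ and $i\in\{0,\ldots,m-1\}$ and set $j:=m-i+1\geq 2$. By the union bound over the $\eta(i)$ particles at site $i$ and Lemma~\ref{waif = forsaken or orphaned child},
\begin{equation*}
    \P\{\ell^{(A)}_i>m-i\}\leq\E\!\left[1\wedge C\eta(i)\, e^{-\AA(i,m+1)}\frac{(\AA(i,m+1))^{j}}{j!}\right].
\end{equation*}
Since $A$ is non-decreasing, $\AA(i,m+1)=\sum_{s=1}^{j}\frac{1}{A(i+s)}\leq\AA(j)$, so $(\AA(i,m+1))^{j}/j!\leq 1/a_{j}$ and hence $\P\{\ell^{(A)}_i>m-i\}\leq\E[1\wedge C\eta(i)/a_{m-i+1}]$. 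Summing over $i$ and changing variables via $J=m-i+1$,
\begin{equation*}
    \sum_{i=0}^{m-1}\P\{\ell^{(A)}_i>m-i\}\leq\sum_{J=2}^{m+1}\sum_{k}\mu(k)\bigl(1\wedge Ck/a_{J}\bigr)\leq\sum_{J=1}^\infty\sum_{k}\mu(k)\bigl(1\wedge Ck/a_{J}\bigr).
\end{equation*}
The sequence $\{a_J\}$ (which is monotone past a finite initial segment) satisfies the hypotheses of Lemma~\ref{crack me up} thanks to Lemma~\ref{lemma peter wants a label here}, so the constant $C$ can be dropped, and it suffices to bound $\sum_J\sum_k\mu(k)(1\wedge k/a_J)$. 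Splitting the inner sum at $k=a_J$,
\begin{equation*}
    \sum_{J=1}^\infty\sum_k\mu(k)\bigl(1\wedge k/a_J\bigr)=\sum_{J=1}^\infty\mu([a_J,\infty))+\sum_{J=1}^\infty\frac{1}{a_J}\sum_{k<a_J}\mu(k)\,k<\infty,
\end{equation*}
where the first series converges by Condition~\ref{con non-expl} and the second by Lemma~\ref{paisley} applied to $\alpha_i=\lceil a_i\rceil$ with $c_\alpha=C_a$. This yields~\eqref{cognate} with some $C^*<\infty$ independent of $m$.

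For~\eqref{menagerie}, set $p_i^{(m)}:=\P\{\ell^{(A)}_i>m-i\}$; then by \eqref{scintillating}, $\P\{m\text{ is dry}\}=\prod_{i=0}^{m-1}(1-p_i^{(m)})$. Monotonicity in $k$ of the event $\{\ell^{(A)}_i\geq k\}$ gives $p_i^{(m)}\leq\P\{\ell^{(A)}_i\geq 2\}$, so it suffices to establish
\begin{equation*}
    p^*:=\sup_{i\in\N}\P\{\ell^{(A)}_i\geq 2\}<1,
\end{equation*}
because then the concavity of $x\mapsto\log(1-x)$ yields $1-x\geq(1-p^*)^{x/p^*}$ on $[0,p^*]$, whence $\P\{m\text{ is dry}\}\geq(1-p^*)^{C^*/p^*}>0$ uniformly in $m$. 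To show $p^*<1$, I would argue (i) that $\AA(i,i+2)\to 0$ as $i\to\infty$ (recall $A(i)\geq i$), so by Lemma~\ref{waif = forsaken or orphaned child} together with dominated convergence $\P\{\ell^{(A)}_i\geq 2\}\to 0$; and (ii) that for each individual $i$ the complementary event $\{\ell^{(A)}_i<2\}$ has positive probability. For~(ii) I would write $\P\{\ell^{(A)}_i<2\}=\E[(1-q_i)^{\eta(i)}]$, where $q_i$ is the probability that a single walk satisfies the fast condition at $i$, and exhibit an explicit positive-probability slow event---for instance, conditioning on the first many jumps of the walk being leftward and using that after $M$ left jumps the walk can only reach height $k\geq 2$ at times $t\geq\tau_{M+k+1}$, which grow linearly in $M+k$, while $\AA(i,i+k)\leq\AA(k)\leq 2+\log k$ grows only logarithmically, ruling out a subsequent catch-up. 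The main obstacle in this plan is step~(ii): Lemma~\ref{waif = forsaken or orphaned child} alone does not force $q_i<1$ for small $i$, so a concrete construction of the slow event for the single random walk is needed.
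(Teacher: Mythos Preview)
Your proof of \eqref{cognate} is essentially the paper's: condition on $\eta(i)$, apply Lemma~\ref{waif = forsaken or orphaned child}, use $\AA(i,m+1)\leq\AA(m-i+1)$ to replace the Poisson weight by $1/a_{m-i+1}$, reindex, remove the constant $C$ via Lemma~\ref{crack me up}, and split the resulting sum at $k=a_J$. The only differences are cosmetic (you take $j=m-i+1$ where the paper takes $j=m-i$, and you discard the factor $e^{-\AA(i,m+1)}\leq 1$ that the paper carries along).

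For \eqref{menagerie} the paper takes a slightly different route. Instead of your concavity bound $1-x\geq(1-p^*)^{x/p^*}$ with $p^*=\sup_i\P\{\ell^{(A)}_i\geq 2\}$, it uses $1-x\geq e^{-x/(1-x)}$ to get
\[
\P\{m\text{ is dry}\}\geq\exp\!\Bigl(-g_m\sum_{i=0}^{m-1}p_i^{(m)}\Bigr),\qquad g_m=\frac{1}{1-\max_{0\leq i\leq m-1}p_i^{(m)}},
\]
and then shows $\max_i p_i^{(m)}\to 0$ as $m\to\infty$ directly (splitting into large and small $m-i$ and using $A(x)\to\infty$). This disposes of all large $m$ without your step~(ii). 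What remains---that $p_i^{(m)}<1$ for the finitely many small $m$---is precisely your obstacle~(ii), which the paper leaves implicit. So the obstacle you flag is real but no worse than what the paper also needs; the paper's inequality just confines it to a finite check rather than a uniform bound $p^*<1$. Your proposed construction for~(ii) (force many initial left jumps, so that reaching height $k\geq 2$ requires time linear in $k$ while $\AA(i,i+k)\leq\AA(k)\leq 2+\ln k$) is sound and would close the gap in either argument.
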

	\begin{proof}
		The inequality $1-x \geq e^{-\frac{x}{1-x}}$, $x \in (0,1)$,
		implies
		\begin{equation}\label{assemble}
			\P\{ m \text{ is dry} \} 
			= \prod\limits _{i=0} ^{m-1} \left( 1 -  \P \{ \ell ^{(A)} _i > m-i \}
			\right)
			\geq \exp\left( - g_m \sum\limits _{i=0} ^{m-1}  \P \{ \ell ^{(A)} _i > m-i  \} \right),
		\end{equation}
		where 
		$$g_m =  \frac{1}{1 - \max\limits _{0 \leq i \leq m-1} \P \{ \ell ^{(A)} _i > m-i  \}}.$$
		Note that $g_m  \xrightarrow {m \to \infty} 1$
		since 
		$\max\limits _{0 \leq i \leq m-1} \P \{ \ell ^{(A)} _i > m-i  \}
		= \max\limits _{0 \leq i \leq m-1} \P \{ \ell ^{(A)} _{m-i} > i  \}
		\xrightarrow{m \to \infty} 0$
		which holds due to  $A(x)\xrightarrow{x \to \infty} \infty$.
		
		Let us find a bound for the sum in the exponent of \eqref{assemble}. 
		Conditioning on the number of particles on the site $i$ 
		we get 
		\begin{equation}\label{racy}
			\P \{ \ell ^{(A)} _i > m-i \} \leq 
			\sum\limits_{k=0}^\infty \mu(k) \left(1 \wedge k\P\left\{ \exists t > 0:
			t \leq \sum\limits^{i + S _{t} }_{z = i+1 } \frac{1}{A(z)} 
			\text{ and } S _{t}  \geq m-i 
			\right\}\right).
		\end{equation}	
		By \eqref{racy} and Lemma \ref{waif = forsaken or orphaned child}	for some $C\geq1$,	
		\begin{align}
			\notag
			\sum\limits _{i=0} ^{m-1}  \P \{ \ell ^{(A)} _i > m-i  \}
			& \leq  \sum\limits _{i=0} ^{m-1} \sum\limits_{k=0}^\infty \mu(k) \left(1 \wedge k\P\left\{ \exists t > 0:
			t \leq \sum\limits^{i + S _{t} }_{z = i+1 } \frac{1}{A(z)} 
			\text{ and } S _{t}  \geq m-i 
			\right\}\right)
			\\  & \notag
			\leq 
			\sum\limits _{i=0} ^{m-1} \sum\limits_{k=0}^\infty \mu(k) \left(1 \wedge k C
			\exp\left\{-\AA(i, m) \right\}
			\frac{ \left( \AA(i, m) \right) ^{m-i}}{(m-i)!}  \right)
			\\ & \notag
			\overset{i \to m -i}{=} 
			\sum\limits _{i=1} ^{m} \sum\limits_{k=0}^\infty \mu(k) \left(1 \wedge k C
			\exp\left\{-\AA(m-i, m) \right\}
			\frac{ \left( \AA(m-i, m) \right) ^{i}}{i!} \right)
			\\ &
			\leq
			\sum\limits _{i=1} ^{\infty} \sum\limits_{k=0}^\infty \mu(k) \left(1 \wedge k C
			\exp\left\{-\AA(m-i, m) \right\}
			\frac{ \left( \AA(m-i, m) \right) ^{i}}{i!} \right).
			\label{hand over fist}
		\end{align}
		Recall that 
		$a_i = 
		\frac{i!}{\left(\AA(i)\right) ^i}$.
		Since $A$ is non-decreasing
		for $i \in \Z_+ $ and $m \in \N$, $m > i$, we have
		\begin{equation}\label{thimble = cap for finger}
			a_i^{-1} = 
			\frac{ \left(\AA(i)\right) ^i}{i!}
			\geq 
			\exp\left\{ - \AA(m-i,m) \right\}
			\frac{ \left( \AA(m-i,m)  \right) ^{i}}{i!}.
		\end{equation}
		Hence by \eqref{hand over fist}
		and Lemma  \ref{crack me up} 
		
		\begin{align*}
			\sum\limits _{i=0} ^{m-1}  \P \{ \ell ^{(A)} _i > m-i  \}
			&	\leq \sum\limits _{i=1} ^{\infty} \sum\limits_{k=0}^\infty \mu(k) \left(1 \wedge k C a_i^{-1} \right)
			\\
			&
			\precsim
			\sum\limits _{i=1} ^{\infty} \sum\limits_{k=0} \mu(k) \left(1 \wedge k
			a_i^{-1} \right)    
			\\  &
			=
			\sum\limits _{i=1} ^{\infty} \sum\limits_{k: k \geq
				a_i } \mu(k) \left(1 \wedge k
			a_i^{-1} \right)     + 
			\sum\limits _{i=1} ^{\infty} \sum\limits_{k: k <
				a_i  } \mu(k) \left(1 \wedge k
			a_i^{-1} \right)   
			\\&
			= 
			\sum\limits _{i=1} ^{\infty} \mu\left( \left[
			a_i,
			\infty \right)  \right) + 
			\sum\limits _{i=1} ^{\infty} \sum\limits_{k: k <
				a_i   }
			\mu(k)  k
			a_i^{-1}
			\\  &
			= S_1 + S_2.
		\end{align*}
		
		By \eqref{throw the baby out with the bathwater: lose sth u didnt want to}
		\begin{equation}
			S_1 \leq  
			\sum\limits _{i=0} ^{\infty} \mu\left( \left[
			a_i,
			\infty \right)  \right) < \infty.
		\end{equation}
		To bound $S_2$ 
		recall that
		$a_0 = 0$
		and $b_i = \mu((a_{i-1}, a_i])$. 
		By Lemma \ref{lemma peter wants a label here}
		for some
		$C_a> 1$ we have
		\begin{equation*}
			\sum\limits _{i=j}^\infty \frac{1}{a_i} \leq \frac{C_a}{a_j}, 
			\ \ \ j \in \N,
		\end{equation*}
		and hence 
		\begin{multline*}
			S_2 = 	\sum\limits _{i=1} ^{\infty}
			\sum\limits _{j = 1} ^i
			\sum\limits_{k: a_{j-1} < k < a_j } 
			\mu(k)  k
			\frac { 1}   {a_i} 
			\\
			\leq 
			\sum\limits _{i=0} ^{\infty}
			\sum\limits _{j = 1} ^i
			b_j a_j
			\frac { 1}   {a_i}
			\leq  \sum\limits _{j = 1} ^{\infty} b_j a_j \sum\limits _{i=j} ^{\infty}\frac { 1}   {a_i}
			\leq  \sum\limits _{j = 1} ^{\infty} b_j a_j  \frac{C_a}{a_j}
			= C_a \sum\limits _{j = 1} ^{\infty} b_j \leq C_a.
		\end{multline*}
		Thus \eqref{cognate} is proven.
		Since $g_m \xrightarrow{m \to \infty} 1$,
		\eqref{menagerie} follows from
		\eqref{cognate} and 
		\eqref{assemble}.
	\end{proof}

	\begin{proof}[Proof of Theorem \ref{main_theorem}, (i).]
		\label{proof non explosion}
		Recall that 
		$\A_t$ is the set of sites visited by 
		\Viktor{active particles} by the time $t$,
		\begin{equation*}
			\theta _n  = \min\{t \geq 0: n \in \A _t  \},\quad n\in\N,
		\end{equation*}
		$\theta _\infty = \lim\limits _{n \to \infty} \theta  _n$,
		and all sleeping particles left to the origin are removed
		at the beginning.
		The event $\{ \theta _\infty < \infty\}$
		is a tail event with respect to  the $\sigma$-algebra
		$ \sigma \{S _{t} ^{(x ,j)}, t\geq 0, 0\leq x\leq n,
		1 \leq j \leq \eta (x) \}$.
		Hence 
		\begin{equation}\label{der Verdacht}
			\P\{ \theta _\infty < \infty\} \in \{0,1\}.
		\end{equation} 
		We have a.s. 
		\begin{equation*}
			\{ \text{explosion occurs} \} = \{ \theta _\infty < \infty\}.	 
		\end{equation*}
		\Viktor{ Let us now point out that  for every site $x \in \N$ there is a finite sequence $(y_j, k_j, s_j)_{j \in \{0,1,...,m\}}$ 
			such that $0 = y_0 < y_1 < ... <y_m = x$, and sleeping particles at $y_j$ are activated at time $s_j$ by the particle $(y_{j-1}, k_{j-1})$ started at $y_{j-1}$, $j = 1,...,m$. Furthermore, for every $x \in \N$ such a sequence is a.s.\ uniquely defined.}
		
		\Viktor{Consider a random sequence of particles  $(x_j, k_j)_{j \in { \mathcal{I}}}$
			such that 
			the site $x_{j+1} \in \Z_+$ 
			is   activated by
			the particle $(x_j, k_j)$, $1 \leq k_j \leq \eta (x_j)$, and  $x_0 = 0$.
			Denote also by $t_j$ the (random) time 
			when the site $x_j$ was activated.}
		The index set $\mathcal{I}$ is either $\Z_+$ 
		or $\{0,1,...,m\}$ for some $m \in \N$. 
		Note that we have
		$0= x_0 < x_1 < x_2 < ...$ 
		because
		all particles left of the origin are removed,
		and because we know exactly the order
		in which the sites of $\Z_+$
		are getting visited by active particles.
		We call the interval $(x_n, x_{n+1}]$
		fast if
		\begin{equation}\label{hackneyed = overused}
			t_{n+1} - t_n \leq  \sum\limits _{j= x_n +1} ^{x _{n+1}}\frac{1}{A(j)};
		\end{equation}
		otherwise we call the interval $(x_n, x_{n+1}]$
		slow. 
		\Viktor{Note that while it is not necessarily true 
			that every wet site belongs to a
			fast interval, it is true that every dry site belongs to a slow interval. 
			Indeed, take $y \in \N$. Consider the event $\{y \text{ is dry} \}$.
			We are going to show that a.s.\ on this event $y$ belongs to a slow interval
			$(x_n, x_{n+1}]$ for some $n \in \N$.
			By definition  a.s.\ on this event
			\begin{equation*}
				z + \ell ^{(A)} _z < y, \ \ \  \forall  z < y.
			\end{equation*}
			Therefore by definition of $\ell ^{(A)} _z$
			a.s.\ on $\{y \text{ is dry} \}$
			\begin{equation}\label{zaeh = tough, chewy}
				\forall z <y \, \forall k \geq y-z \, \forall t \geq 0 \,
				\forall j \in \overline{1, \eta (z)}:
				S _{t} ^{(z ,j)} \geq k \Rightarrow t > 
				\sum\limits^{z + S _{t} ^{(z ,j)}}_{m = z+1 } \frac{1}{A(m)}.
			\end{equation}
			Now for $n \in \Z _+$ consider the event $\{ y \in (x_n,x_{n+1}] \text{ and } 
			y \text{ is dry}\}$. Taking $z = x_n$
			and $k = x_{n+1}-x_n$
			in \eqref{zaeh = tough, chewy} 
			we find that a.s.\ on $\{ y \in (x_n,x_{n+1}] \text{ and } 
			y \text{ is dry}\}$
			\[
			\forall t \geq 0 \,
			\forall j \in \overline{1, \eta (x_n)}:
			S _{t} ^{(x_n ,j)} \geq x_{n+1}-x_n \Rightarrow t > 
			\sum\limits^{x_n + S _{t} ^{(x_n ,j)}}_{m = x_n+1 } \frac{1}{A(m)},
			\]
			hence   a.s.\ on $\{ y \in (x_n,x_{n+1}] \text{ and } 
			y \text{ is dry}\}$
			\begin{equation}\label{tuechtig = efficient}
				\forall t \geq 0 \,
				\forall j \in \overline{1, \eta (x_n)}:
				t  \leq 
				\sum\limits^{x_{n+1}}_{m = x_n+1 } \frac{1}{A(m)}
				\Rightarrow
				S _{t} ^{(x_n ,j)} < x_{n+1}-x_n.
			\end{equation}
			By construction the site $x_{n+1}$ 
			is activated at time $t_{n+1}$ by a particle started at  $x_n$,
			and the site $x_n$ was activated at time $t_n$. Therefore a.s.
			$$
			t_{n+1} - t_n = \inf\Big\{ t> 0 \big| \exists j \in \overline{1, \eta (x_n)}: 
			S_t ^{(x_n ,j)} = x_{n+1}-x_n.
			\Big\}
			$$
			From \eqref{tuechtig = efficient}  and the right-continuity of the random walk trajectories it follows that a.s.\ on $\{ y \in (x_n,x_{n+1}] \text{ and } 
			y \text{ is dry}\}$
			$$
			t_{n+1} - t_n  > \sum\limits^{x_{n+1}}_{m = x_n+1 } \frac{1}{A(z)},
			$$
			that is, that the $(x_n, x_{n+1}]$ is slow.
			Taking a union over $n$ we see that indeed a.s.\ every dry sight belongs to a slow interval.
		}
		
		\Viktor{Since particles at the site $x_{n+1}$ are activated
			by a particle started at $x_n$ we have 
			$    \theta _{x_{n }} = t _{n} 
			$.}
		\Viktor{Next we} bound $ \theta _{x_n}$
		from below
		by imagining that fast intervals
		are traveled over instantaneously,
		whereas slow intervals take 
		the time equal to 
		the expression on the right hand side of 
		\eqref{hackneyed = overused} to traverse.
		\Viktor{By definition of a slow interval we have for $m \in \N $ a.s.
			\begin{multline*}
				\theta _{x_m} = t _m  = (t _m - t _{m-1}) + 
				(t _{m-1} - t _{m-2}) + ... + (t _1 - t _0) 
				\\
				\geq \sum\limits _{j= 1} ^{x _{m}}\frac{1}{A(j)} \1\{j \text{ belongs to a slow interval} \}.
			\end{multline*}
		}
		\Viktor{Since a.s. every dry sight belongs to a slow interval we also have
			a.s.
			\begin{equation*}
				\theta _{x_m} 
				\geq \sum\limits _{j= 1} ^{x _{m}}\frac{1}{A(j)} \1\{j \text{ is dry} \}.
			\end{equation*}
			and for $n< m$
			\begin{equation}\label{putative = considered, reputed to be}
				\theta _{x_m} - \theta _{x_n} 
				\geq \sum\limits _{j= x_n+1} ^{x _{m}}\frac{1}{A(j)} \1\{j \text{ is dry} \}.
			\end{equation}
		}
		By  Proposition \ref{sleuth = detective}
		for some $c \in (0,1]$
		\begin{equation}\label{gimpy = limping, lame}
			\P(D_n) \geq c, 
			\ \ \ n \in \N,
		\end{equation}
		where $D_n = \{ n \text{ is dry}\}$.
		For $n \in \N$ let $r_n$ be the minimal element 
		of
		$\{x_j\}_{j \in \Z_+}$ to the right of $n$:
		\begin{equation*} 
			r_n = \min\{x: x \in \{x_j\}_{j \in \Z_+}, x \geq n \}.  
		\end{equation*} 
		Assume that 
		\begin{equation}\label{munchkin = gnom}
			\P\{ \theta _\infty < \infty\} = 1.
		\end{equation}
		\Viktor{Since a.s.\ $\theta_m \to \theta_\infty$, $m \to \infty$, 
			there exists $N \in \N$ such that the event $B := \{\theta _\infty \leq \theta _{_N} +1 \} $ satisfies $P(B)\geq 1 - \frac c3$.	
			Note that a.s.\ on $B$}
		\begin{equation}\label{salacious}
			\theta_\infty \leq \theta_n + 1 \leq \theta_{r_n} + 1, \  \ \ n \geq N. 
		\end{equation}
		We have
		\[
		\P(D_i \cap B) \geq \P(D_i) + \P(B) - 1 \geq \frac 23 c.
		\]
		There exists $N' \in \N$, $N' \geq N$,
		such that 
		\[
		\P\{ r _{_N} \leq N' -1 \} \geq 1 - \frac c3.
		\]
		We have \Viktor{then} $\P(D_i \cap B \cap  \{ r _{_N} \leq N' \} )
		\geq \frac c3$, and 
		hence \Viktor{by \eqref{putative = considered, reputed to be}}
		\begin{align*}
			\E \big[ (\theta_\infty - \theta _{_N}) \1_B \big] & \geq 
			\E\big[ (\theta_\infty - \theta _{_{N'}}) \1_B 
			\1 \{ r _{_N} \leq N' \} \big]
			\\
			& \geq 
			\E \Big[ \1_B 
			\1 \{ r _{_N} \leq N' \} 
			\sum\limits _{i = N'}^{\infty}
			\1 \{j \text{ is dry} \} 
			\frac{1}{A(j)} \Big]
			\\
			& = 
			\sum\limits _{i = N'}^{\infty}
			\frac{1}{A(j)} \E \big[\1_B 
			\1 \{ r _{_N} \leq N' \} 
			\1 \{j \text{ is dry} \} \big]
			\\
			&= 
			\sum\limits _{i = N'}^{\infty}
			\frac{1}{A(j)} \P( B
			\cap 
			\{ r _N \leq N' \} 
			\cap D_j 
			)
			\geq
			\sum\limits _{i = N'}^{\infty} 
			\frac{c}{3 A(i)} =  
			\infty,
		\end{align*}
		but this contradicts \eqref{salacious}
		since by \eqref{salacious}
		it should hold that $\E (\theta_\infty - \theta _N)  \1_B \leq 1$. 
		Thus \eqref{munchkin = gnom} cannot hold,
		and we have \Viktor{by \eqref{der Verdacht}}
		\begin{equation}
			\P\{ \theta _\infty < \infty\} = 0,
		\end{equation}
		that is, the probability of explosion is zero.
	\end{proof}

	\section{Proof of explosion}
	\label{sec explosion proof}

	This section is devoted to the proof of explosion of the frog model. First we relate the associated TADIBP to the explosion of the frog model. Next we state conditions for percolation, and this gives the desired result.

	\subsection{Connecting TADIBP and explosion}

	As stated above, our first step is to relate percolation of the TADIBP to explosion of the frog model.
	This approach uses the activation times of certain sites related to the TADIBP process, similar to Proposition 4.2 in \cite{frogL}.

	\begin{prop}\label{perc_expl}
		Assume that the TADIBP for $\{\ell_x^{(A)}\}_{x\in\LZ_+}$ percolates, where $A$ satisfies
		\begin{displaymath}
			\sum_{z=1}^\infty\frac{1}{A(z)}<\infty.
		\end{displaymath}
		\Peter{Then for any $x_0\in\N$, explosion occurs almost surely on $\{x_0\xrightarrow{\LZ_+}\infty\}$.}
	\end{prop}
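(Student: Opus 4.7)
The plan is to convert the percolation structure of the TADIBP directly into a chain of frog-activations whose total time is finite. On $\{x_0 \xrightarrow{\Z_+} \infty\}$, apply Lemma \ref{perc_sequence} with $x = x_0$ to obtain a random sequence $y_0 < y_1 < y_2 < \dots$ in $\Z_+$ such that $y_0 \leq x_0 < y_1$, $\psi_{y_i} = \ell_{y_i}^{(A)} \geq y_{i+1} - y_i$ for every $i \in \Z_+$, and every $z \geq x_0$ belongs to at most two of the intervals $[y_i, y_i + \psi_{y_i}]$. Let $T_i$ denote the time at which $y_i$ is first visited by an active particle in the frog model. Standard results on the continuous-time frog model on $\Z$ (see e.g.\ \cite{stocCombust}) imply that every site is a.s.\ visited in finite time, so in particular $T_0 < \infty$ a.s.

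The key estimate is $T_{i+1} - T_i \leq \sum_{z=y_i+1}^{y_i + \psi_{y_i}} \frac{1}{A(z)}$. Since $\ell_{y_i}^{(A)} \geq y_{i+1} - y_i$, the definition \eqref{secrete} supplies indices $j_i \in \overline{1,\eta(y_i)}$ and times $t_i > 0$ with $S^{(y_i,j_i)}_{t_i} \geq y_{i+1} - y_i$ and
\[
t_i \;\leq\; \sum_{z = y_i + 1}^{y_i + S^{(y_i,j_i)}_{t_i}} \frac{1}{A(z)} \;\leq\; \sum_{z = y_i + 1}^{y_i + \psi_{y_i}} \frac{1}{A(z)},
\]
where the second inequality uses $S^{(y_i,j_i)}_{t_i} \leq \ell_{y_i}^{(A)} = \psi_{y_i}$, which holds since $\ell_{y_i}^{(A)}$ is the supremum of displacements satisfying the condition in \eqref{secrete}. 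The particle indexed $(y_i,j_i)$ begins its walk precisely at the activation moment $T_i$, so at absolute time $T_i + t_i$ it occupies the position $y_i + S^{(y_i,j_i)}_{t_i} \geq y_{i+1}$, which forces $T_{i+1} \leq T_i + t_i$.

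Summing over $i$ and invoking the two-fold covering property from Lemma \ref{perc_sequence} gives
\[
T_n - T_0 \;\leq\; \sum_{i=0}^{n-1} t_i \;\leq\; \sum_{i=0}^{n-1} \sum_{z = y_i + 1}^{y_i + \psi_{y_i}} \frac{1}{A(z)} \;\leq\; 2 \sum_{z = 1}^{\infty} \frac{1}{A(z)} \;<\; \infty,
\]
so $T_\infty := \lim_n T_n < \infty$ a.s.\ on $\{x_0 \xrightarrow{\Z_+} \infty\}$. Since $y_n \to \infty$, infinitely many sites are visited by the finite time $T_\infty$, which is precisely explosion. The main subtlety is synchronising the intrinsic walker clock appearing in \eqref{secrete} with the global frog-model clock: one must observe that $(S^{(y_i,j_i)}_t)_{t \geq 0}$ is by construction the displacement of the $j_i$-th particle at $y_i$ measured from its own activation instant, and note that the possibility of $y_{i+1}$ being activated earlier via some independent chain only strengthens the inequality $T_{i+1} \leq T_i + t_i$.
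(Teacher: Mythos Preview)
Your proof is correct and follows essentially the same approach as the paper: extract the covering sequence from Lemma~\ref{perc_sequence}, use the definition of $\ell^{(A)}$ to bound the time each successive hop takes, and then invoke the two-fold covering to control the telescoping sum by $2\sum_z 1/A(z)$. The only cosmetic difference is that the paper telescopes over the right endpoints $x_n+\ell_{x_n}^{(A)}$ of the covering intervals, whereas you telescope over the germs themselves; both give the same bound.
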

	\begin{proof}
		Consider TADIBP with $\psi _x = \ell^{(A)}_x$.
		Take a percolation sequence $\{x_n\}_{n\in\LN}$ given by Lemma~\ref{perc_sequence} \Peter{corresponding to the set $\{x_0\xrightarrow{\LZ_+}\infty\}$}. Set $y_n=x_n+\ell_{x_n}^{(A)}$ and denote by $\sigma_x$ the activation time of location $x$, i.e.\ the first time an active frog visits site $x$.
		Note that a.s.\ $\sigma_x < \infty$ for every $x \in \Z$
		since at $t = 0$ there is at least one active particle at the origin.
		\Viktor{By definition of $\ell_{x_n}^{(A)}$
			in \eqref{secrete} a.s.\ on
			$\{x_0\xrightarrow{\LZ_+}\infty\}$
			there exists $j_n\in\overline{1,\eta(x_n)}$
			such that}
		\begin{displaymath}
			\sigma_{y_n}-\sigma_{y_{n-1}}\leq\sigma_{y_n}-\sigma_{x_n}\leq\sum_{z=x_n+1}^{x_n+S_{\sigma_{y_n}}^{(x_n,j_n)}}\frac{1}{A(z)}.
		\end{displaymath}
		Furthermore, since each point $z$ is in at most two of the intervals $[x_i,x_i+\ell_{x_i}^{(A)}]$, we have that 
		\begin{displaymath}
			\sum_{n=1}^\infty(\sigma_{y_n}-\sigma_{y_{n-1}})
			\precsim  
			2\sum_{z=1}^\infty\frac{1}{A(z)}<\infty.
		\end{displaymath}
		Therefore
		$\sigma _\infty = \lim\limits _{n \to \infty} \sigma _n < \infty.$, i.e.\ the total activation time ``up to infinity'' is finite.
	\end{proof}

	\subsection{Conditions for percolation}
	
	The next step is to find conditions on the tail distribution of TADIBP
	with $\psi _x = \ell^{(A)}_x$ such that the system  percolates.
	The random variables 
	$\psi _x = \ell^{(A)}_x$
	are independent but not identically distributed. 
	Recall that the Markov chain $\{Y_m\}_{m \in \Z_+}$
	was defined in Definition \ref{Aufwand = Effort}.
	Note that $Y_m>0$ for all but finitely many $m\in\Z_+$ is equivalent to percolation of the system $\{\psi_x\}_{x\in\Z_+}$.
	
	\begin{lem}\label{conv_perc}
		Assume that 
		\begin{displaymath}
			\sum_{m=1}^\infty\prod_{i=0}^m(1-\fP\{\psi_{m-i}>i \})<\infty.
		\end{displaymath}
		Then a.s.\ there exists $x_0\in\LZ_+$ connected to $\infty$:
		\begin{displaymath}
			\fP\{x_0\xrightarrow{\LZ_+}\infty\text{ for some }x_0\in\Z_+\}=1.
		\end{displaymath}
	\end{lem}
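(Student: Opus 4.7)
The plan is a direct application of the first Borel--Cantelli lemma to the events $E_m := \{Y_m = 0\}$, $m \in \LZ_+$. First, I would combine Definition~\ref{Aufwand = Effort} with the formula
\begin{equation*}
Y_m = \psi_m \vee (\psi_{m-1}-1) \vee \dotsb \vee (\psi_1 - m+1) \vee (\psi_0 - m)
\end{equation*}
recorded just below it to rewrite
\begin{equation*}
\{Y_m = 0\} = \bigcap_{i=0}^{m} \{\psi_{m-i} \leq i\}.
\end{equation*}
Since the family $\{\psi_x\}_{x \in \LZ_+}$ is independent by construction of the TADIBP, this gives the identity
\begin{equation*}
\fP\{Y_m = 0\} = \prod_{i=0}^{m} \bigl(1 - \fP\{\psi_{m-i} > i\}\bigr),
\end{equation*}
and the standing assumption of the lemma is precisely that $\sum_{m=1}^{\infty} \fP\{Y_m=0\} < \infty$.

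Next I would apply the first Borel--Cantelli lemma (which imposes no independence hypothesis on the $E_m$) to conclude that almost surely only finitely many of the events $E_m$ occur. In other words, there exists a (random) index $x_0 \in \LZ_+$ such that $Y_m > 0$ for every $m \geq x_0$.

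To finish, I would invoke the characterization recorded in Section~\ref{sec tadibp} that, for $x \in \LZ_+$, one has $x \xrightarrow{\LZ_+} \infty$ if and only if $Y_m > 0$ for all $m \geq x$. Applied to $x = x_0$ this yields $x_0 \xrightarrow{\LZ_+} \infty$, so the system percolates almost surely.

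There is no real obstacle in this argument: the whole content lies in recognizing that the product appearing in the hypothesis is exactly $\fP\{Y_m = 0\}$ and that Borel--Cantelli is available despite the fact that $Y_m$ and $Y_{m+k}$ share many of the same $\psi_j$'s (and hence the events $E_m$ are strongly dependent). The independence that \emph{is} used is the pointwise independence of the $\psi_x$'s, which is what allows the probability of the intersection defining $E_m$ to factor into the given product.
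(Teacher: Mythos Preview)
Your argument is correct and is essentially identical to the paper's own proof: compute $\fP\{Y_m=0\}$ as the given product via independence of the $\psi_x$'s, apply the first Borel--Cantelli lemma to conclude that $Y_m=0$ only finitely often, and then use the equivalence between $x\xrightarrow{\LZ_+}\infty$ and $Y_m>0$ for all $m\geq x$. Your write-up is slightly more detailed (spelling out the intersection and the role of the characterization of connection to infinity), but the route is the same.
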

	\begin{proof}
		Since all $\psi_x$ are independent, the following identity holds by Definition \ref{Aufwand = Effort}:
		\begin{displaymath}
			\fP\{ Y_m=0 \} =\prod_{i=0}^m(1-\fP\{ \psi_{m-i}>i\}).
		\end{displaymath}
		By Borel-Cantelli, the assumption implies that
		\begin{displaymath}
			\fP\{ Y_m=0\text{ infinitely often}\}=0,
		\end{displaymath}
		and the system percolates.
	\end{proof}

	Next, we need to find out which conditions on the initial distribution imply that
	\begin{equation}\label{tail_series}
		\sum_{m=1}^\infty\prod_{i=0}^m(1-r_{m-i}^i)<\infty,
	\end{equation}
	where we set $r_{m-i}^i=\fP \{ \psi_{m-i}>i \}$. To this end, we establish inhomogeneous analogues of the lemmas from \cite{frogL}. We write $r_{m-i}^i(A)$ if the coefficient corresponds to $\psi_{m-i}=\ell_{m-i}^{(A)}$.  Note that $i$ and $m-i$ are interchangeable in \eqref{tail_series}.

	The following lemma shows that we may assume that $A(m)>1$ for all $m\in\N$.
	\begin{lem}\label{A_gtr_1}
		Set
		\begin{displaymath}
			z_0:=\min\{z\in\N\colon A(z)>1\text{ and }\mu([0,A(z)])>0\}.
		\end{displaymath}
		Furthermore, define $\tilde{A}(m):=A(m+z_0-1)$. Then for any $\rho>1$, 
		\begin{displaymath}
			\sum_{m=1}^\infty\prod_{i=1}^m\mu([0,A(m)^{\rho i}])\simeq\sum_{m=1}^\infty\prod_{i=1}^m\mu([0,\tilde{A}(m)^{\rho i}]).
		\end{displaymath}
		Note that $\tilde{A}(m)>1$ for all $m\in\N$.
	\end{lem}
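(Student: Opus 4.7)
The approach is to reindex the second series and show that it differs from the first only by a uniformly bounded multiplicative factor. Substituting $n = m - z_0 + 1$ rewrites
$$\sum_{m=1}^\infty\prod_{i=1}^m\mu([0,\tilde{A}(m)^{\rho i}]) = \sum_{m=z_0}^\infty\prod_{i=1}^{m-z_0+1}\mu([0,A(m)^{\rho i}]),$$
so dropping the first $z_0 - 1$ terms of the original series (which does not affect convergence) reduces the problem to comparing, for each $m \geq z_0$, the full product $\prod_{i=1}^m \mu([0, A(m)^{\rho i}])$ with the truncated one $\prod_{i=1}^{m - z_0 + 1} \mu([0, A(m)^{\rho i}])$. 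These differ by the ``extra'' product $\prod_{i=m-z_0+2}^m \mu([0, A(m)^{\rho i}])$, consisting of exactly $z_0 - 1$ factors.

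The key observation is that each extra factor lies in $[c_0, 1]$, where $c_0 := \mu([0, A(z_0)]) > 0$, uniformly in $m \geq z_0$. Indeed, $A$ is non-decreasing and $A(z_0) > 1$, so for $m \geq z_0$ we have $A(m) \geq A(z_0) > 1$, and combined with $\rho > 1$ and $i \geq 1$ this yields $A(m)^{\rho i} \geq A(m) \geq A(z_0)$, whence $\mu([0, A(m)^{\rho i}]) \geq c_0$. Consequently the extra product lies in $[c_0^{z_0 - 1}, 1]$ uniformly in $m$, so the $m$-th terms of the two series are comparable up to positive multiplicative constants, and the series therefore converge or diverge together.

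The only mildly delicate point is to use the definition of $z_0$ in precisely the right way: it is the conjunction $A(z_0) > 1$ \emph{and} $\mu([0, A(z_0)]) > 0$ that simultaneously guarantees the inequality $A(m)^{\rho i} \geq A(z_0)$ for all $m \geq z_0$ and $i \geq 1$, and the strict positivity $c_0 > 0$ that makes the lower bound on the extra product useful. Once these two roles of $z_0$ are separated, the remainder is a routine reindexation and factor-by-factor comparison.
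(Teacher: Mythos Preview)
Your argument is correct. Both you and the paper reindex the $\tilde{A}$-series back onto the range $m \geq z_0$ and then compare term by term with the original series; the difference lies only in which $z_0 - 1$ factors of the product are peeled off. The paper splits off the \emph{first} $z_0 - 1$ factors $\prod_{i=1}^{z_0 - 1}\mu([0,\tilde{A}(m)^{\rho i}])$, bounds them below by the fixed constant $\prod_{i=1}^{z_0-1}\mu([0,\tilde{A}(1)^{\rho i}])$, and then needs an additional step shifting the exponent in the remaining product (using $\tilde{A}(m)^{\rho(i+z_0-1)} \geq \tilde{A}(m)^{\rho i}$). You instead split off the \emph{last} $z_0 - 1$ factors $\prod_{i=m-z_0+2}^{m}\mu([0,A(m)^{\rho i}])$ and bound each directly by $c_0 = \mu([0,A(z_0)])$, which avoids the exponent shift entirely. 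Your route is slightly more economical; the paper's route makes the role of $\tilde{A}(m) > 1$ in the exponent comparison more explicit. Either way the same two properties of $z_0$ drive the argument, exactly as you identify.
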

	\begin{proof}
		We show ``$\succsim$'' first, i.e., assume that
		\begin{displaymath}
			\sum_{m=1}^\infty\prod_{i=1}^m\mu([0,A(m)^{\rho i}])<\infty.
		\end{displaymath}
		We have
		\begin{align*}
			\sum_{m=1}^\infty\prod_{i=1}^m\mu([0,A(m)^{\rho i}])& \geq\sum_{m=z_0}^\infty\prod_{i=1}^m\mu([0,A(m)^{\rho i}])
			\\
			&=\sum_{m=1}^\infty\prod_{i=1}^{m+z_0-1}\mu([0,A(m+z_0-1)^{\rho i}])
			\\
			&=\sum_{m=1}^\infty\prod_{i=1}^{m+z_0-1}\mu([0,\tilde{A}(m)^{\rho i}])
			\\
			&=\sum_{m=1}^\infty\Big[\prod_{i=1}^{z_0-1}\mu([0,\tilde{A}(m)^{\rho i}])\Big]\Big[\prod_{i=z_0}^{m+z_0-1}\mu([0,\tilde{A}(m)^{\rho i}])\Big]
			\\
			&\geq\prod_{i=1}^{z_0-1}\mu([0,\tilde{A}(1)^{\rho i}])\sum_{m=1}^\infty\prod_{i=1}^m\mu([0,\tilde{A}(m)^{\rho i+\rho z_0-\rho}])
			\\
			&\geq\prod_{i=1}^{z_0-1}\mu([0,\tilde{A}(1)^{\rho i}])\sum_{m=1}^\infty\prod_{i=1}^m\mu([0,\tilde{A}(m)^{\rho i}]).
		\end{align*}
		For the direction ``$\precsim$'', assume that
		\begin{displaymath}
			\sum_{m=1}^\infty\prod_{i=1}^m\mu([0,\tilde{A}(m)^{\rho i}])<\infty.
		\end{displaymath}
		Then
		\begin{align*}
			\sum_{m=1}^\infty\prod_{i=1}^m\mu([0,A(m)^{\rho i}])&\leq z_0-1+\sum_{m=z_0}^\infty\prod_{i=1}^m\mu([0,A(m)^{\rho i}])
			\\
			&=z_0-1+\sum_{m=1}^\infty\prod_{i=1}^{m+z_0-1}\mu([0,A(m+z_0-1)^{\rho i}])
			\\
			&=z_0-1+\sum_{m=1}^\infty\prod_{i=1}^{m+z_0-1}\mu([0,\tilde{A}(m)^{\rho i}])
			\\
			&\leq z_0-1+\sum_{m=1}^\infty\prod_{i=1}^{m}\mu([0,\tilde{A}(m)^{\rho i}]),
		\end{align*}
		which proves the second direction.
	\end{proof}
	
	In view of Lemma \ref{A_gtr_1}, we may assume from now on that $A(m)>1$ for all $m\in\N$.
	To estimate $r_{m-i}^i$, we need the following lemma, which is the inhomogeneous analogue to Lemma~4.6 of \cite{frogL}:
	Recall that $(S_t)_{t\geq 0}$ is a continuous-time simple random walk on $\Z$.
	\begin{lem}\label{eps_estimate}
		Assume that $A\colon\N\to(1,\infty)$ is non-decreasing and
		\begin{equation}\label{A_diverges}
			\lim_{z\to\infty}A(z)=\infty.
		\end{equation}
		
		Then for any $n,x\in\Z_+$,
		\begin{displaymath}
			\fP\Big\{\exists t\geq 0\colon t\leq\sum_{z=x+1}^{x+S_t}\frac{1}{A(z)}, S_t>n\Big\}\geq\frac{e^{(1-\frac{1}{A(x+n+1)})(n+1)}}{A(x+n+1)^{n+1}e2^{n+1}\sqrt{n+1}}.
		\end{displaymath}
		
	\end{lem}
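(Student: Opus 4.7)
The plan is to exhibit a single explicit favourable event whose probability already achieves the claimed lower bound; since we only need a lower bound, there is no loss in restricting to a small but easily analysable scenario. The natural choice is the event that the random walk makes its first $n+1$ jumps all to the right and all within a short time window. Concretely, I would set
\[
t_0 := \frac{n+1}{A(x+n+1)}
\]
and consider the event
\[
E := \{\tau_{n+1} \leq t_0\} \cap \{\text{the first }n+1\text{ jumps of }(S_t)\text{ are all }+1\}.
\]
On $E$, at time $t := \tau_{n+1} \leq t_0$ one has $S_t = n+1 > n$, and since $A$ is non-decreasing,
\[
\sum_{z=x+1}^{x+S_t} \frac{1}{A(z)} = \sum_{z=x+1}^{x+n+1} \frac{1}{A(z)} \geq \frac{n+1}{A(x+n+1)} = t_0 \geq t,
\]
so $E$ is contained in the event whose probability we wish to bound.

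Next I would compute a lower bound on $\P(E)$. Using independence of the jump times from the $\pm 1$ signs, and writing $(N_t)_{t\geq 0}$ for the Poisson process counting the jumps of $(S_t)$,
\[
\P(E) = 2^{-(n+1)} \P\{N_{t_0} \geq n+1\} \geq 2^{-(n+1)} \P\{N_{t_0} = n+1\} = 2^{-(n+1)} e^{-t_0} \frac{t_0^{n+1}}{(n+1)!}.
\]
Substituting $t_0 = (n+1)/A(x+n+1)$, this becomes
\[
\P(E) \geq \frac{1}{2^{n+1}} \, e^{-\frac{n+1}{A(x+n+1)}} \, \frac{(n+1)^{n+1}}{A(x+n+1)^{n+1} (n+1)!}.
\]

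Finally, I would apply the Stirling upper bound $(n+1)! \leq e \sqrt{n+1}\, (n+1)^{n+1} e^{-(n+1)}$, which gives
\[
\frac{(n+1)^{n+1}}{(n+1)!} \geq \frac{e^{n+1}}{e \sqrt{n+1}},
\]
and combining this with the previous display yields exactly
\[
\P(E) \geq \frac{e^{(1-\frac{1}{A(x+n+1)})(n+1)}}{A(x+n+1)^{n+1} \, e \, 2^{n+1} \sqrt{n+1}},
\]
as required. There is no real obstacle here beyond selecting the right favourable event: the hypothesis \eqref{A_diverges} is not needed for the inequality itself but ensures that $t_0$ is well-defined and meaningful, and the assumption $A > 1$ guarantees that the factor $e^{(1-1/A(x+n+1))(n+1)}$ is bounded away from $0$ so the estimate is non-trivial.
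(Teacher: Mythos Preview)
Your proposal is correct and follows essentially the same route as the paper: both restrict to the explicit event that the first $n+1$ jumps are all $+1$ and occur before time $t_0=(n+1)/A(x+n+1)$, bound $\P\{\tau_{n+1}\leq t_0\}$ below by the single Poisson term $e^{-t_0}t_0^{n+1}/(n+1)!$, and then apply the Stirling-type inequality $(n+1)!\leq e\sqrt{n+1}\,(n+1)^{n+1}e^{-(n+1)}$. Your observation that hypothesis \eqref{A_diverges} is not actually used in the inequality is also accurate; the paper's proof does not invoke it either.
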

	\begin{proof}
		Recall that $\tau _n$ 
		is the $n$-th jump of $(S_t, t\geq 0)$ and thus
		has the Erlang distribution as the sum of $n$ independent unit exponentials. In particular,
		\[
		\P \{ \tau _n \leq b\} \geq \frac{e^{-b}b^n}{n!}.
		\]
		{
			Since $\frac{1}{A(z)}<1$ for all $z\in\N$,
			\begin{align*}
				\fP\Big\{\exists t\geq 0\colon t\leq\sum_{z=x+1}^{x+S_t}\frac{1}{A(z)}, S_t>n\Big\}&\geq\fP\Big\{\exists t\geq 0\colon t\leq\frac{n+1}{A(x+n+1)}, S_t=n+1\Big\}
				\\
				&\geq\fP\Big\{\frac{\tau_{n+1}}{n+1}\leq A(x+n+1)^{-1}\Big\}\fP(S_{\tau_j}-S_{\tau_j-}=1, j\in\overline{1,n+1})
				\\
				&\geq e^{-\frac{n+1}{A(x+n+1)}}\frac{(n+1)^{n+1}}{A(x+n+1)^{n+1}(n+1)!}2^{-(n+1)}
				\\
				&\geq\frac{e^{(1-\frac{1}{A(x+n+1)})(n+1)}}{A(x+n+1)^{n+1}e2^{n+1}\sqrt{n+1}}.
			\end{align*}
		}
	\end{proof}
	
	For the convergence analysis below we set for $m\in\N$ and $i\in\{0,1,\dotsc,m\}$
	\begin{displaymath}
		E_2(i,m)=\frac{1}{e\sqrt{i+1}}\Big(\frac{e^{1-\frac{1}{A(m+1)}}}{2A(m+1)}\Big)^{i+1}.
	\end{displaymath}
	
	Recall that $r_{m-i}^i$ was defined right after \eqref{tail_series}.
	\begin{lem}[cf. Lemma 5.4, \cite{frogL}]\label{L5.4}
		Assume that $A(m)>1$ for all $m\in\N$ and
		\begin{displaymath}
			\lim_{z\to\infty}A(z)=\infty.
		\end{displaymath}
		Then for the function {$E_2(i,m)$} defined above
		we have
		\begin{displaymath}
			r_{m-i}^i(A) 
			\geq 1-\sum_{k=0}^\infty\mu(k)\left[1-E_2(i,m)\right]^k
		\end{displaymath}
		for all $m\in\N$ and $i\in\{0,1,\dotsc,m\}$.
	\end{lem}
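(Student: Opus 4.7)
The plan is to condition on the number $\eta(m-i)$ of sleeping particles at site $m-i$ and exploit the independence of the associated random walks, then invoke Lemma~\ref{eps_estimate} with a careful identification of parameters.

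First, unwinding the definition of $\ell_{m-i}^{(A)}$ from \eqref{secrete}, we see that the event $\{\psi_{m-i} > i\} = \{\ell_{m-i}^{(A)} > i\}$ occurs if and only if at least one of the $\eta(m-i)$ particles initially sleeping at $m-i$ carries out a trajectory $S_\cdot^{(m-i,j)}$ satisfying, at some time $t>0$,
\[
t \leq \sum_{z=m-i+1}^{m-i+S_t^{(m-i,j)}}\frac{1}{A(z)} \qquad\text{and}\qquad S_t^{(m-i,j)} > i.
\]
Setting
\[
q := \P\Big\{\exists t \geq 0\colon t \leq \sum_{z=m-i+1}^{m-i+S_t}\frac{1}{A(z)},\ S_t > i\Big\},
\]
and using that $\eta(m-i)$ is independent of the walks $\{S_\cdot^{(m-i,j)}\}_{j\geq 1}$, which are themselves i.i.d., the conditional probability given $\eta(m-i) = k$ equals $1-(1-q)^k$. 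Integrating against $\mu$ yields
\[
r_{m-i}^i(A) = \sum_{k=0}^\infty \mu(k)\bigl(1-(1-q)^k\bigr) = 1 - \sum_{k=0}^\infty \mu(k)(1-q)^k.
\]

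Second, I would apply Lemma~\ref{eps_estimate} with the substitutions $x \leftarrow m-i$ and $n \leftarrow i$. The key observation is that then $x + n + 1 = m + 1$, so the right-hand side of the inequality in Lemma~\ref{eps_estimate} becomes
\[
\frac{e^{(1-\frac{1}{A(m+1)})(i+1)}}{A(m+1)^{i+1}\, e\, 2^{i+1}\sqrt{i+1}} = \frac{1}{e\sqrt{i+1}}\Big(\frac{e^{1-\frac{1}{A(m+1)}}}{2A(m+1)}\Big)^{i+1} = E_2(i,m).
\]
Thus $q \geq E_2(i,m)$.

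Finally, since $k \mapsto (1-q)^k$ is nonincreasing in $q \in [0,1]$, the lower bound on $q$ gives $(1-q)^k \leq (1-E_2(i,m))^k$ for every $k\geq 0$. Substituting back yields
\[
r_{m-i}^i(A) \geq 1 - \sum_{k=0}^\infty \mu(k)\bigl(1 - E_2(i,m)\bigr)^k,
\]
which is the desired inequality. No step presents a genuine obstacle; the only point requiring care is the bookkeeping that $x+n+1 = m+1$ in the application of Lemma~\ref{eps_estimate}, so that the resulting bound depends on $m$ only through $A(m+1)$ and thus matches the definition of $E_2(i,m)$.
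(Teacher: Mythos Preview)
Your proof is correct and follows essentially the same approach as the paper: condition on $\eta(m-i)=k$, use independence of the walks to obtain $r_{m-i}^i(A)=1-\sum_k\mu(k)(1-q)^k$, and then apply Lemma~\ref{eps_estimate} with $x=m-i$, $n=i$ so that $x+n+1=m+1$ and $q\geq E_2(i,m)$. The paper additionally remarks that one may replace $t>0$ by $t\geq\tau_i$ in the defining event, but this observation is not needed for the argument and your version is equally complete.
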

	
	\begin{proof}
		We want to estimate $r_{m-i}^i(A)=\fP\{\ell_{m-i}^{(A)}>i\}$. To this end, note that 
		\begin{align*}
			&\{\ell_{m-i}^{(A)}>i\}^c=
			\\
			&=\bigcup_{k=0}^\infty\Bigg(\set{\eta(m-i)=k}\cap \bigg\{\forall t\geq 0\ \forall j\in\set{1,\dotsc,k}\colon t>\sum_{z=m-i+1}^{m-i+S_t^{(m-i,j)}}\frac{1}{A(z)}\text{ or }S_t^{(m-i,j)}\leq i\bigg\}\Bigg)
		\end{align*}
		where the union is disjoint. Since all random walks are independent, this in turn implies that
		\begin{displaymath}
			r_{m-i}^i(A)=1-\fP\{\ell_{m-i}^{(A)}\leq i\}=1-\sum_{k=0}^\infty\mu(k)\left(\fP\Big\{\forall t\geq\tau_i\colon t>\sum_{z=m-i+1}^{m-i+S_t}\frac{1}{A(z)}\text{ or }S_t\leq i\Big\}\right)^k.
		\end{displaymath}
		We may replace the condition $t>0$ by $t\geq\tau_i$ above, since it is impossible for the process $(S_t)_t$ to be larger than $i$ before the $i$-th jump (even before the $(i+1)$-st jump).
		By Lemma~\ref{eps_estimate}, we have for every $m$
		and $i \in \{0,1,...,m\}$ 
		\begin{align*}
			\fP& \Big \{ \forall t\geq\tau_i\colon t>\sum_{z=m-i+1}^{m-i+S_t}\frac{1}{A(z)}\text{ or }S_t\leq i\Big\}
			\\
			&=1-\fP\Big \{ \exists t\geq\tau_i\colon t\leq\sum_{z=m-i+1}^{m-i+S_t}\frac{1}{A(z)},S_t>i\Big \}
			\\
			&{\leq 1-\frac{e^{(1-\frac{1}{A(m+1)})(i+1)}}{A(m+1)^{i+1}e2^{i+1}\sqrt{i+1}}}
			{= 1 - E_2(i,m).}
		\end{align*}
	\end{proof}

		%

	\begin{rmk}\label{index_estimation}
		Fix $m_0,i_0\in\N$. We have
		\begin{align*}
			\sum_{m=1}^\infty\prod_{i=0}^m(1-r_{m-i}^i)&=\sum_{m=1}^{m_0-1}\prod_{i=0}^m\underbrace{(1-r_{m-i}^i)}_{\leq 1}+\sum_{m=m_0}^{\infty}\prod_{i=0}^m(1-r_{m-i}^i)
			\\
			&\leq m_0-1+\sum_{m=m_0}^\infty\prod_{i=i_0}^m(1-r_{m-i}^i).
		\end{align*}
		Therefore, for the convergence of \eqref{tail_series}, it suffices to consider
		\begin{displaymath}
			\sum_{m=m_0}^\infty\prod_{i=i_0}^m(1-r_{m-i}^i)
		\end{displaymath}
		for some $m_0,i_0\in\N$, or
		\begin{displaymath}
			\sum_{m=m_0}^\infty\prod_{i=i_0}^m(1-r_i^{m-i}),
		\end{displaymath}
		\Peter{because $i$ and $m-i$ are interchangeable and hence, the two expressions above are equal.}
	\end{rmk}
	
	Since we want to find conditions on the convergence of \eqref{tail_series}, by Lemma \ref{L5.4}, we may use the inequality
	\begin{displaymath}
		\sum_{m=m_0}^\infty\prod_{i=i_0}^m(1-r_{m-i}^i(A))\leq\sum_{m=m_0}^\infty\prod_{i=i_0}^m\sum_{k=0}^\infty\mu(k)(1-{E_2(i,m)})^k.
	\end{displaymath}
	Let us now bound {$E_2(i,m)$} from below
	by a quantity involving $A$. This bound will be used in the final part of the proof of explosion.
	

\begin{lem}\label{E1_est}
	Assume that the non-decreasing function $A\colon\N\to(1,\infty)$ fulfills
	\begin{displaymath}
		\sum_{z=1}^\infty\frac{1}{A(z)}<\infty.
	\end{displaymath}
	Then there exists $m_0\in\N$ such that for all $m\geq m_0$ and all $0\leq i\leq m$, 
	\begin{displaymath}
		E_2(i,m)\geq\left(\frac{1}{2A(m+1)}\right)^{i+2}.
	\end{displaymath}
\end{lem}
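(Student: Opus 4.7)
The plan is to substitute the definition
\[
E_2(i,m) = \frac{1}{e\sqrt{i+1}}\left(\frac{e^{1-\frac{1}{A(m+1)}}}{2A(m+1)}\right)^{i+1}
\]
into the desired inequality and reduce it to an elementary estimate. Dividing through by $\left(\frac{1}{2A(m+1)}\right)^{i+1}$, the target inequality $E_2(i,m)\geq\left(\frac{1}{2A(m+1)}\right)^{i+2}$ becomes equivalent to
\[
2A(m+1)\,e^{\left(1-\frac{1}{A(m+1)}\right)(i+1)} \;\geq\; e\sqrt{i+1}.
\]
So the lemma reduces to establishing this single inequality uniformly over $0\leq i\leq m$ for all sufficiently large $m$.

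Next I would use the hypothesis $\sum_{z=1}^\infty \frac{1}{A(z)}<\infty$, which forces $A(z)\to\infty$ as $z\to\infty$. Hence there exists $m_0$ such that $A(m+1)\geq 2$ (and in particular $1-\frac{1}{A(m+1)}\geq\frac12$) for all $m\geq m_0$. For such $m$,
\[
2A(m+1)\,e^{\left(1-\frac{1}{A(m+1)}\right)(i+1)} \;\geq\; 2A(m+1)\,e^{(i+1)/2},
\]
so it suffices to verify $2A(m+1)\,e^{(i+1)/2}\geq e\sqrt{i+1}$, that is
\[
2A(m+1) \;\geq\; e\,\sqrt{i+1}\,e^{-(i+1)/2}.
\]

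The only remaining point is to bound the right-hand side uniformly in $i$. Setting $u=i+1$, the function $\varphi(u)=\sqrt{u}\,e^{-u/2}$ on $[1,\infty)$ has derivative with the sign of $1-u$, so it is maximized at $u=1$ with value $e^{-1/2}$. Thus the right-hand side is at most $e\cdot e^{-1/2}=e^{1/2}<2$. Since $A(m+1)>1$ (by the assumption of the lemma, inherited via Lemma~\ref{A_gtr_1}), we have $2A(m+1)>2>e^{1/2}$, and the inequality holds for every $0\leq i\leq m$ and every $m\geq m_0$. There is no genuine obstacle here; the only mildly delicate point is ensuring that the bound on the exponent $1-\frac{1}{A(m+1)}$ is uniform in $i$, which is precisely what allowed us to reduce to maximizing $\varphi$ on $[1,\infty)$ rather than dealing with the $i$-dependent exponential factor directly.
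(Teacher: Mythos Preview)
Your proof is correct, and it takes a genuinely different route from the paper's.

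The paper's argument \emph{discards} the exponential factor via the crude bound $e^{(1-\frac{1}{A(m+1)})(i+1)}\geq 1$, which leaves it needing to show $\frac{1}{e\sqrt{i+1}}\geq\frac{1}{2A(m+1)}$. Since $i\leq m$, this reduces to $A(m+1)\geq\frac{e}{2}\sqrt{m+1}$ for large $m$, a quantitative growth rate that the paper extracts from the summability hypothesis by a short contradiction argument. Your approach instead \emph{keeps} the exponential factor and lets it absorb the $\sqrt{i+1}$: once $A(m+1)\geq 2$ so that the exponent is at least $(i+1)/2$, you only need $2A(m+1)\geq e\sqrt{u}\,e^{-u/2}$ for $u\geq 1$, and the right-hand side is bounded by $e^{1/2}<2$. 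Thus you use the hypothesis only through the much weaker consequence $A(z)\to\infty$. This is slightly cleaner and shows that the lemma already holds under the hypothesis $\lim_{z\to\infty}A(z)=\infty$; the full strength of $\sum 1/A(z)<\infty$ is not needed here.
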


\begin{proof}
	First of all, note the following properties of $A$:
	\begin{itemize}
		\item $A$ diverges, i.e.\ \eqref{A_diverges} is fulfilled.
		\item There exists $m_0$ such that
		\begin{equation}\label{sqrt_estimate_A}
			\frac{1}{A(m+1)}\leq\frac{2}{e\sqrt{m+1}}
			\ \ \ \text{for all } m \geq m_0.
		\end{equation}
	\end{itemize}
	
	This can be seen as follows: assume that this is not the case. Then for each $n_0$, there exists $m\geq n_0$ such that
	\begin{equation}\label{contradiction_A}
		\frac{1}{A(m+1)}>\frac{2}{e\sqrt{m+1}}.
	\end{equation}
	Since $A$ is non-decreasing, we have for all $n\leq m$,
	\begin{displaymath}
		\sum_{j=1}^{m+1}\frac{1}{A(j)}\geq\sum_{j=1}^{m+1}\frac{1}{A(m+1)}>\frac{2(m+1)}{e\sqrt{m+1}}=\frac{2\sqrt{m+1}}{e}.
	\end{displaymath}
	Choosing a sequence $\{m_k\}_{k\in\N}$ such that $m_k\to\infty$ and \eqref{contradiction_A} holds for all $k\in\N$, we have
	\begin{displaymath}
		\lim_{k\to\infty}\sum_{j=1}^{m_k+1}\frac{1}{A(j)}=+\infty
	\end{displaymath}
	which contradicts the assumption of the lemma. Hence \eqref{sqrt_estimate_A} is true.

	Let $m\geq m_0$, where $m_0\in\N$ is chosen such that \eqref{sqrt_estimate_A} holds. Using that $e^{1-\varepsilon}\geq 1$ and  taking into account that $i\leq m$, we have
	
	\begin{align*}
		E_2(i,m)=\underbrace{\frac{1}{e\sqrt{i+1}}}_{\geq\frac{1}{e\sqrt{m+1}}}\underbrace{\Big(\frac{e^{1-\frac{1}{A(m+1)}}}{2A(m+1)}\Big)^{i+1}}_{\geq\Big(\frac{1}{2A(m+1)}\Big)^{i+1}}\geq\Big(\frac{1}{2A(m+1)}\Big)^{i+2}.
	\end{align*}
\end{proof}
Next, we want to translate the condition above into something more useful, i.e.\ dependent on the initial distribution of the frog model.

\begin{lem}\label{shrew is a small animal}
	We have
	\begin{equation}
		\sum_{m=m_0}^\infty\prod_{i=i_0}^m\sum_{k=0}^\infty\mu(k)(1-E_2(i,m))^k\precsim\sum_{m=m_0}^\infty\prod_{i=i_0}^m\mu([0,2A(m)^{{ {\rho}}i}]).
	\end{equation}
\end{lem}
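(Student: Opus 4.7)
The plan is to bound each inner sum $\sum_k \mu(k)(1-E_2(i,m))^k$ by $\mu([0, 2A(m)^{\rho i}])$ up to a small error, and then absorb the accumulated errors into a multiplicative constant in the product over $i$. Concretely, for each $m \geq m_0$ and each $i$, set $K_{i,m} := \lfloor 2 A(m)^{\rho i} \rfloor$. Splitting the inner sum at $k = K_{i,m}$, using $(1-E_2(i,m))^k \leq 1$ for $k \leq K_{i,m}$ and $(1-E_2(i,m))^k \leq e^{-K_{i,m} E_2(i,m)}$ for $k > K_{i,m}$, I would obtain
\begin{equation*}
\sum_{k=0}^\infty \mu(k)(1-E_2(i,m))^k \leq \mu([0, K_{i,m}]) + \exp(-K_{i,m} E_2(i,m)).
\end{equation*}

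Next I would invoke Lemma \ref{E1_est} to get $E_2(i,m) \geq (2A(m+1))^{-(i+2)}$ and hence
\begin{equation*}
K_{i,m} E_2(i,m) \geq \frac{2 A(m)^{\rho i}}{(2A(m+1))^{i+2}}.
\end{equation*}
Choosing $i_0$ so that $(\rho - 1) i_0 > 2$ and using that the assumption $\sum 1/A(z) < \infty$ forces $A(m) \to \infty$, I would verify that for $m$ beyond some $m_1 \geq m_0$ and all $i \geq i_0$ the right-hand side grows at least geometrically in $i$, giving a uniform bound $\sum_{i \geq i_0} \exp(-K_{i,m} E_2(i,m)) \leq C$ for $m \geq m_1$.

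Finally, since $K_{i,m} \to \infty$ as $m \to \infty$ and $\mu$ is a probability measure on $\Z_+$, there exists $c > 0$ such that $\mu([0, K_{i,m}]) \geq c$ for all $i \geq 1$ and all sufficiently large $m$. Factoring $\mu([0, K_{i,m}])$ out of the product and applying $\log(1+x) \leq x$ then yields
\begin{equation*}
\prod_{i=i_0}^m \bigl( \mu([0, K_{i,m}]) + e^{-K_{i,m} E_2(i,m)} \bigr) \leq e^{C/c} \prod_{i=i_0}^m \mu([0, K_{i,m}]).
\end{equation*}
Summing over $m$ and invoking Remark \ref{index_estimation} to absorb the finitely many small values of $m$ and $i$ would complete the proof. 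The main obstacle is the uniform control of $\sum_i \exp(-K_{i,m} E_2(i,m))$ in the second step: since $A(m+1)/A(m)$ is not assumed bounded, the exponential decay has to be argued carefully from the monotonicity and divergence of $A$, exploiting that $\rho > 1$ is strictly greater than $1$ so that $A(m)^{\rho i}$ eventually dominates $A(m+1)^{i+2}$ for large $i$.
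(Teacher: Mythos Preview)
Your overall strategy matches the paper's: split each inner sum at a threshold of order $A(m)^{\rho i}$, get $\mu([0,2A(m)^{\rho i}])$ plus an exponentially small error, then absorb the errors into a bounded multiplicative factor. The difference, and the place where your argument has a real gap, is in how you handle the error term.

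You want to show that
\[
K_{i,m}\,E_2(i,m)\ \ge\ \frac{2A(m)^{\rho i}}{(2A(m+1))^{i+2}}
\]
grows (at least geometrically) in $i$, uniformly over large $m$. But nothing in the hypotheses controls $A(m+1)/A(m)$. For instance, $A(m)=2^{2^m}$ satisfies $\sum 1/A(m)<\infty$ and is non-decreasing, yet $A(m+1)=A(m)^2$. For any $\rho\in(1,2)$ one then has
\[
\frac{A(m)^{\rho i}}{A(m+1)^{i+2}}=\frac{A(m)^{\rho i}}{A(m)^{2(i+2)}}=A(m)^{(\rho-2)i-4}\xrightarrow[i\to\infty]{}0,
\]
so your exponent tends to $0$, not to $\infty$, and the error term does not decay at all. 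Your final sentence explicitly asserts that ``$A(m)^{\rho i}$ eventually dominates $A(m+1)^{i+2}$ for large $i$''; this is precisely what fails.

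The paper avoids the problem by first applying Lemma~\ref{E1_est} and then performing the index shift $m+2\mapsto m$, $i+2\mapsto i$ (legitimate by Remark~\ref{index_estimation}) \emph{before} choosing the splitting threshold. After the shift one is comparing against $(2A(m))^{-i}$, with the \emph{same} argument $m$ in both the threshold and the lower bound for $E_2$. The error term then becomes $\exp\!\big(-2A(m)^{(\rho-1)i}\big)$, which involves only $A(m)$ and manifestly decays super-geometrically in $i$, uniformly in $m$. That alignment is the key step you are missing; once you insert it, the rest of your outline goes through exactly as in the paper.
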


\begin{proof}
	By Lemma \ref{E1_est}, we have
	\begin{align}
		\sum_{k=0}^\infty\mu(k)(1-{E_2(i,m)})^k&\leq\sum_{k=0}^\infty\mu(k)\left(1-\left(\frac{1}{2A(m+1)}\right)^{i+2}\right)^k \notag
		\\
		&\leq\sum_{k=0}^\infty\mu(k)\left(1-\left(\frac{1}{2A(m+2)}\right)^{i+2}\right)^k. \label{froth}
	\end{align}
	Since we may start the convergence analysis at arbitrary $m_0$ and $i_0$, we may shift the index $m+2\mapsto m$ and $i+ 2\mapsto i$ and consider
	\begin{displaymath}
		\sum_{k=0}^\infty\mu(k)\left(1-\left(\frac{1}{2A(m)}\right)^{i}\right)^k.
	\end{displaymath}
	\label{page: reduction of exponent}
	Set $\varkappa=\rho-1>0$.
	We have
	\begin{align}
		\sum_{k=0}^\infty\mu(k)\left(1-2A(m)^{-i}\right)^k&\leq\sum_{k=0}^\infty\mu(k)e^{-2kA(m)^{-i}} \notag
		\\
		&=\sum_{k=0}^{\lceil 2A(m)^{ {\rho}i}\rceil}\mu(k)\underbrace{e^{-2kA(m)^{-i}}}_{\leq 1}+\sum_{k=\lceil  2A(m)^{ {\rho}i}\rceil+1}^\infty\mu(k)\underbrace{e^{-2kA(m)^{-i}}}_{\leq e^{-4A(m)^{ {\rho}i}2A(m)^{-i}}} \notag
		\\
		&\leq\mu([0,2A(m)^{ {\rho}i}])+e^{-2A(m)^{ {\varkappa}i}}. \label{minstrel}
	\end{align}
	Next, let $n_0\in\LN$ such that
	$n_0\geq m_0$
	as well as 
	$\mu([0,2A(m)^{i}])\geq\frac{1}{2}$, where $m_0$ is given as in Lemma~\ref{E1_est}. Then we have
	\begin{align}
		\frac{\prod_{i=i_0}^m\left(\mu([0,2A(m)^{ {\rho}i}])+e^{-2A(m)^{ {\varkappa}i}}\right)}{\prod_{i=i_0}^m\mu([0,2A(m)^{ {\rho}i}])}&=\prod_{i=i_0}^m\left(1+\frac{e^{-2A(m)^{ {\varkappa}i}}}{\mu([0,2A(m)^{ {\rho}i}])}\right) \notag
		\\
		&\leq\underbrace{\prod_{i=i_0}^{n_0-1}\left(1+\frac{e^{-2A(m)^{ {\varkappa}i}}}{\mu([0,2A(m)^{ {\rho}i}])}\right)}_{=:C_1}\cdot\prod_{i=n_0}^m\left(1+2e^{-2A(m)^{ {\varkappa}i}}\right). \label{morose = sullen}
	\end{align}
	The first product is finite and the second one converges, since the series
	\begin{displaymath}
		\sum_{i=n_0}^\infty e^{-2A(n)^{ {\varkappa}i}}\leq\sum_{i=n_0}^\infty e^{-2A(n_0)^{ {\varkappa}i}}
	\end{displaymath}
	converges absolutely by the ratio test:
	\begin{displaymath}
		\frac{e^{-2A(n_0)^{ {\varkappa}(i+1)}}}{e^{-2A(n_0)^{ {\varkappa}i}}}=e^{2A(n_0)^{ {\varkappa}i}(1-2A(n_0)^{ {\varkappa}})}\leq e^{1-2A(n_0)^{ {\varkappa}}}<1.
	\end{displaymath}
	Therefore the fraction on the left hand side of 
	\eqref{morose = sullen} is bounded.
	This means that
	\begin{align*}
		\sum_{m=m_0}^\infty\prod_{i=i_0}^m\left(\mu([0,2A(m)^{ {\rho}i}])+e^{-2A(m)^{ {\varkappa}i}}\right)&=\sum_{m=m_0}^{n_0-1}\prod_{i=i_0}^m\left(\mu([0,2A(m)^{ {\rho}i}])+e^{-2A(m)^{ {\varkappa}i}}\right)
		\\
		&\hspace{20pt}+C_1\sum_{m=n_0}^\infty\prod_{i=n_0}^m\left(\mu([0,2A(m)^{ {\rho}i}])+e^{-2A(m)^{ {\varkappa}i}}\right)
		\\
		&\simeq\sum_{m=n_0}^\infty\prod_{i=n_0}^m\mu([0,2A(m)^{ {\rho}i}]).
	\end{align*}
	and therefore by \eqref{froth} and \eqref{minstrel}
	\begin{equation*}
		\sum_{m=m_0}^\infty\prod_{i=i_0}^m\sum_{k=0}^\infty\mu(k)(1-E_2(i,m))^i\precsim\sum_{m=m_0}^\infty\prod_{i=i_0}^m\mu([0,2A(m)^{{ {\rho}}i}]).
	\end{equation*}
\end{proof}
Finally, replacing
in Lemma \ref{shrew is a small animal}
the sequence $\{A(m)\}_{m\in\N}$ by $\{B(m)\}_{m\in\N}$ with $B(m)=2A(m)$ for all $m\in\N$ and putting all lemmas and calculations together, we arrive at the following result:
\begin{thm}\label{end_theorem}
	Assume that there exists a non-decreasing function $B\colon\N\to(0,\infty)$ such that
	\begin{displaymath}
		\sum_{z=1}^\infty\frac{1}{B(z)}<\infty.
	\end{displaymath}
	Furthermore, assume there exists $\rho>1$ such that the initial distribution of the frog model satisfies
	\begin{displaymath}
		\sum_{m=1}^\infty\prod_{i=1}^m\mu([0,B(m)^{\rho i}])<\infty.
	\end{displaymath}
	Then the frog process explodes a.s.
\end{thm}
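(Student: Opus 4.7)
The plan is to combine the results of this section into a chain: percolation of the TADIBP with $\psi_x = \ell_x^{(A)}$ implies explosion by Proposition \ref{perc_expl}, percolation itself follows from the Borel--Cantelli criterion in Lemma \ref{conv_perc}, and that criterion is verified via the tail estimates in Lemma \ref{L5.4} and Lemma \ref{shrew is a small animal}. Given the function $B$ from the hypothesis, I will set $A(m) := B(m)/2$; this $A$ is non-decreasing and inherits $\sum 1/A(z) < \infty$ from $B$. In particular $B(m) \to \infty$ and so $A(m) > 1$ for all sufficiently large $m$, which by Lemma \ref{A_gtr_1} allows me without loss of generality to assume $A(m) > 1$ for every $m \in \N$ (the reduction preserves the convergence of the relevant double series).

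Next I will verify the hypothesis of Lemma \ref{conv_perc}, namely
$$\sum_{m=1}^\infty \prod_{i=0}^m \bigl(1 - r_{m-i}^i(A)\bigr) < \infty.$$
By Remark \ref{index_estimation} it suffices to establish convergence of $\sum_{m \geq m_0} \prod_{i=i_0}^m (1 - r_{m-i}^i(A))$ for some $m_0, i_0 \in \N$. Lemma \ref{L5.4} gives the pointwise upper bound
$$1 - r_{m-i}^i(A) \leq \sum_{k=0}^\infty \mu(k)\bigl(1 - E_2(i,m)\bigr)^k,$$
and Lemma \ref{shrew is a small animal} then yields
$$\sum_{m \geq m_0} \prod_{i=i_0}^m \sum_{k=0}^\infty \mu(k)(1-E_2(i,m))^k \ \precsim\ \sum_{m \geq m_0} \prod_{i=i_0}^m \mu\bigl([0, 2A(m)^{\rho i}]\bigr) = \sum_{m \geq m_0} \prod_{i=i_0}^m \mu\bigl([0, B(m)^{\rho i}]\bigr),$$
which is finite by the assumption of the theorem.

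Consequently Lemma \ref{conv_perc} gives $\P\{x_0 \xrightarrow{\Z_+} \infty \text{ for some } x_0 \in \Z_+\} = 1$; by countable additivity there is $x_0 \in \Z_+$ (random) with $x_0 \xrightarrow{\Z_+} \infty$ a.s. Since $\sum 1/A(z) < \infty$, Proposition \ref{perc_expl} applies to this $x_0$, so explosion occurs a.s.\ on $\{x_0 \xrightarrow{\Z_+} \infty\}$, hence a.s.

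I do not anticipate a serious obstacle: all the hard estimation work has already been performed in Lemmas \ref{eps_estimate}, \ref{L5.4}, \ref{E1_est}, and \ref{shrew is a small animal}. The only delicate bookkeeping is the index shift on Page~\pageref{page: reduction of exponent} together with the identification $B = 2A$, which must be arranged so that the final series in the hypothesis matches the one controlled by Lemma \ref{shrew is a small animal}; Lemma \ref{A_gtr_1} is what guarantees that truncating the initial finitely many values of $A$ (to enforce $A > 1$) does not affect the convergence of $\sum_m \prod_i \mu([0, A(m)^{\rho i}])$.
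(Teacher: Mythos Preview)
Your proposal is correct and follows essentially the same route as the paper: set $A=B/2$, use Lemma~\ref{A_gtr_1} to reduce to $A>1$, combine Lemma~\ref{L5.4} with Lemma~\ref{shrew is a small animal} (and Remark~\ref{index_estimation}) to verify the Borel--Cantelli condition of Lemma~\ref{conv_perc}, and conclude via Proposition~\ref{perc_expl}. The paper's own proof is terser but assembles exactly these ingredients; your version simply spells out the chain more explicitly, including the countable-union argument over $x_0$ that the paper leaves implicit.
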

\begin{proof}
	By the above calculation and Lemma \ref{L5.4}, we have that
	\begin{displaymath}
		\sum_{m=1}^\infty\prod_{i=0}^m(1-r_{m-i}^i(A))<\infty.
	\end{displaymath}
	By Lemma \ref{conv_perc}, the corresponding TADIBP percolates. By Proposition~\ref{perc_expl}, the system explodes.
\end{proof}

\bibliographystyle{alphaSinus}
\bibliography{Sinus}

\end{document}